\documentclass[12pt,reqno]{amsart}%{amsart}
\usepackage{graphicx}
\usepackage{amsmath,amsthm,amsfonts,amscd,amssymb,mathrsfs}
\usepackage[all]{xypic}
\usepackage[enableskew]{youngtab} % For young tableaux, clash with xypic

\usepackage{etoolbox}

\usepackage{color}
\usepackage{arydshln}
\usepackage{latexsym}
\usepackage[all]{xy}
\usepackage{cite}
\topmargin=-0.3in %
\evensidemargin=0in %
\oddsidemargin=0in %
\textwidth=6.5in %
\textheight=9.0in %
\headsep=0.4in %

\numberwithin{equation}{section}
\newtheorem{theorem}{Theorem}[section]

\newtheorem{lemma}[theorem]{Lemma}

\newtheorem{ex}[theorem]{Example}
\newtheorem*{ex*}{Example}
\newtheorem{df}[theorem]{Definition}

       %% matrix %%
    %% small matrix %%
\newcommand{\rank}{\mbox{rank}\,}

    %% title, section, subsection   %%
    %% formulas                     %%
    %% emphasized, terminology      %%
    %% text                         %%
    %% example, proof, question     %%
    %% definition, theorem, collary %%

%\font\germ=eufm10

\newcommand\g{\mathfrak g}
\newcommand\s{\mathfrak s}

\newcommand\gl{\mathfrak {gl}}

\renewcommand\l{\mathfrak l}

\renewcommand\s{\mathfrak s}

\newcommand\N{\mathcal N}

\newcommand\F{\mathbb F}

\newcommand\ad{\operatorname{ad}\,}

\newcommand\End{\mbox{\rm End\,}}
\newcommand\Hom{\mbox{\rm Hom\,}}
\newcommand\Der{\mbox{\rm Der\,}}

\newcommand{\Ker}{\mbox{\rm Ker\,}}
\renewcommand{\Im}{\mbox{\rm Im\,}}

\newcommand{\cen}[1]{{\rm Z}({#1})}

\newcommand{\mc}{\mathcal }

\newcommand{\E}[2]{E^{(#2)}_{#1}}

\def\keywords#1{{\bf Keywords:}{#1}}

\begin{document}
\title[Derivations of Lie algebra of strictly block upper triangular matrices]{Derivations of the Lie algebra of strictly block upper triangular matrices}
\author{Prakash Ghimire, Huajun Huang}
\date{}
\subjclass[2010]{Primary 17B40, Secondary 15B99, 16W25, 17B05}

\maketitle
\begin{abstract}
Let  $\N$ be the Lie algebra  of all $n\times n$ strictly block upper triangular matrices  over a field $\F$  relative to a given partition.
In this paper, we give an explicit description of all derivations of $\N$.
\end{abstract}

\keywords{{\bf Keywords:} derivation; strictly block upper triangular matrix; nilpotent Lie algebra.}

\section{Introduction}

Given a field $\F$, let $M_{mn}$ be the set of all $m\times n$ matrices over $\F$, and $M_{n}:= M_{nn}$. Let $\N$ denote the set of all strictly block upper triangular matrices in $M_{n}$ relative to a given partition. Then $\N$ can be viewed as a Lie subalgebra of $M_{n}$ with  the standard Lie bracket $[X, Y]= XY-YX$.

The main purpose of this article is to study the derivations of  Lie algebra $\N$. Recall that  a derivation of Lie algebra $\g$ is an $\F$-linear map $f:\g \to \g$  that satisfies
\begin{eqnarray*}
f([X, Y])= [f(X), Y]+ [X, f(Y)]\qquad \ \ \text{for all}\ X, Y\in \g.
\end{eqnarray*}
The result could be viewed as an extension (over $\F$) of  Ou, Wang and Yao's work on the derivations of the Lie algebra of strictly upper triangular matrices in $\g\l(n, R)$, where $R$ is a commutative ring with unity \cite{OWY}.

The derivation algebras $\Der(\N)$ for all possible $\N$ constitute an important family of the derivation algebras of nilpotent Lie algebras. Each $\N$ is a direct sum of root spaces in the root space decomposition of $\s\l(n,\F)$ or $\gl(n,\F)$, so that $\N$ and $\Der(\N)$ have elegant graded structures relative to the roots. 
Moreover, a derivation $f\in \Der(\g)$ of a Lie algebra $\g$ always maps the center $\cen{\g}$ into itself.  There is an induced Lie algebra homomorphism
$\phi: \Der(\g) \to \Der(\g/\cen{\g})$, $f\mapsto \bar{f}$, defined by
$$
\bar{f}(a+\cen{\g}):=f(a)+\cen{\g}\quad \text{ for } f\in\Der(\g),\ a\in\g.
$$
The  $\Ker{\phi}$ consists of all  $f\in\End(\g)$ that satisfy $\Ker{f}\supseteq [\g,\g]$ and $\Im{f}\subseteq \cen{\g}$. In particular, $\Ker{\phi}\simeq\Hom_{\F}(\g/[\g,\g],\cen{\g})$ as vector spaces.
Moreover, $\Der(\g)/\Ker{\phi}$ is isomorphic to a subalgebra of $\Der(\g/\cen{\g})$.
If $\g$ is  nilpotent, then $\g/\cen{\g}\simeq\ad \g$ is isomorphic to a subalgebra of the specific Lie algebra $\N$ of strictly upper triangular matrices in $\End(\g)$ \cite[Engel's Theorem]{JE}. Therefore, knowledge on   $\Der(\N)$ would be helpful to explore the   derivations of an arbitrary nilpotent Lie algebra.

In recent years, significant progress has been made in studying the derivations and generalized derivations of matrix Lie algebras and their subalgebras over a field or a ring.  Besides  Ou, Wang and Yao's work \cite{OWY},
Chen determined the structure of certain generalized derivations of a parabolic subalgebra of $\g\l(n, \F)$ over a field $\F$ with $\rm{char}(\F)\neq 2$ and $|\F|\geq n \geq 3$\cite{C}; Ghimire and Huang studied the derivations of Lie algebras of dominant upper triangular ladder matrices over  a field $\F$ with $\rm{char}(\F)\neq 2$\cite{GH}; Brice described the derivations of parabolic subalgebras of reductive Lie algebra over an algebraically closed and characteristics zero field, and proved the zero-product determined property of such derivation algebras\cite{B}. Let $R$ be a commutative ring with identity; Cheung characterized proper Lie derivations and gave  sufficient conditions for any Lie derivation to be proper for triangular algebras over $R$ \cite{CS}; Du and Wang investigated the Lie derivations of $2\times 2$ block generalized matrix algebras \cite{DW}; Wang, Ou and Yu described the derivations of intermediate Lie algebras between diagonal matrix algebra and upper triangular matrix algebra in $\g\l(n, R)$ \cite{WOY}; Wang and Yu characterized all derivations of parabolic subalgebras of $\g\l(n, R)$ \cite{WQ}.

The Lie triple derivations of matrix Lie algebra are also extensively studied over a ring $R$ with identity, for examples, on $\g\l(n, R)$ \cite{KZL}, on the algebra of upper triangular matrices of $\g\l(n, R)$ \cite{DB}, on the parabolic subalgebras of $\g\l(n, R)$ \cite{LCL} and on the algebra of strictly upper triangular matrices of $\g\l(n, R)$ \cite{TL}. More recently, Benkovi$\check{c}$ has described the Lie derivations and Lie triple derivations of upper triangular matrix algebras over a unital algebra \cite{DB.}.

In the rest of this paper, we fix a  given $t\times t$ partition of $M_n$, and the corresponding Lie algebra $\N$ of strictly block upper triangular matrices.
  Let $\N^{ij}$  denote the set of matrices in $M_n$  that take zero outside the $(i, j)$ block, and $\cen{\N}=\N^{1t}$  the center of $\N$.  Theorems \ref{thm:derivation not 2} and  \ref{thm:derivation char 2} provide the main results of this paper. Theorem \ref{thm:derivation not 2} shows that:
when $\rm{char}(\F)\neq 2$, every derivation $f$ of the Lie algebra $\N$ can be decomposed as follow:
\begin{eqnarray*}
f &=& \ad{X} +\varphi_{1t} + {\phi}^{12}_{2t}+ {\phi}^{t-1,t}_{1,t-1},
\end{eqnarray*}
where
\begin{itemize}
\item $X$ is a block upper triangular matrix in $M_{n}$ with the same partition as $\N$;
\item $\varphi_{1t}\in \End(\N)$ such that $\Im {\varphi_{1t}}\subseteq \cen{\N}$ and $\Ker{\varphi_{1t}}\supseteq [\N,\N]=\text{span}\{\N^{ij}\mid j>i+1\}$;
\item ${\phi}^{12}_{2t} \equiv 0$ unless  the $(1,2)$ block  of $\N$ has only one row, in which ${\phi}^{12}_{2t}\in \Der(\N)$ such that ${\phi}^{12}_{2t}(\N^{12})\subseteq \N^{2t}$  and ${\phi}^{12}_{2t}(\N^{ij})=0$ for the other $\N^{ij}\subseteq\N$; the explicit form of ${\phi}^{12}_{2t}$ is given in Lemma \ref{lemma A12.};
\item ${\phi}^{t-1,t}_{1,t-1}\equiv 0$ unless the $(t-1,t)$ block of $\N$ has only one column, in which  ${\phi}^{t-1,t}_{1,t-1}\in \Der(\N)$ such that ${\phi}^{t-1,t}_{1,t-1}(\N^{t-1,t})\subseteq \N^{1,t-1}$  and ${\phi}^{t-1,t}_{1,t-1}(\N^{ij})=0$ for the other $\N^{ij}\subseteq\N$; the explicit form of ${\phi}^{t-1,t}_{1,t-1}$ is given in Lemma  \ref{lemma A_t-1,t.}.
\end{itemize}
 Theorem \ref{thm:derivation char 2} shows that: when $\rm{char}(\F) = 2$, every derivation $f$  of $\N$ has the following form:
 $$ f=\ad{X} +\varphi_{1t} + {\phi}^{12}_{2t}+ {\phi}^{t-1,t}_{1,t-1} + {\psi}^{12;13}_{3t;2t}+\psi^{t-1,t;t-2,t}_{1,t-2;1,t-1},$$
 where  the  first four summand components  are the same as those in  $\rm{char}(\F)\neq 2$,
 and the last two  are determined as follow:
\begin{itemize}
\item ${\psi}^{12;13}_{3t;2t}\equiv 0$ unless the first block row of $\N$ has only one row, in which
  $ {\psi}^{12;13}_{3t;2t}\in \Der(\N)$ maps $\N^{12}$ to $\N^{3t}$, $\N^{13}$ to $\N^{2t}$, and the other $\N^{ij}\subseteq\N$ to 0;
the explicit form  of ${\psi}^{12;13}_{3t;2t}$ is given in Lemma  \ref{lemma A12 and A13.};

\item $\psi^{t-1,t;t-2,t}_{1,t-2;1,t-1}\equiv 0$ unless the last block column of $\N$ has only one column, in which
  $\psi^{t-1,t;t-2,t}_{1,t-2;1,t-1}\in \Der(\N)$ maps $\N^{t-1,t}$ to $\N^{1,t-2}$, $\N^{t-2,t}$ to $\N^{1,t-1}$, and the other $\N^{ij}\subseteq\N$ to 0;
the explicit form  of $\psi^{t-1,t;t-2,t}_{1,t-2;1,t-1}$ is given in Lemma  \ref{lemma A(t-1,t) and A(t-2,t)}.
\end{itemize}

In Section 2, we will prove several auxiliary results on  linear transformations of matrix spaces that satisfy certain special properties. In Section $3$, we will introduce and  characterize the derivations of   Lie algebra $\N$ over a field $\F$ with $\rm{char}(\F)\neq 2$. In Section $4$, we will explicitly describe the derivations of $\N$ for $\rm {char}(\F) = 2$.

\section{Preliminary} %{Lie derivations of algebras of strictly block upper triangular matrices}

We prove some lemmas on linear transformations  between matrix spaces that have some special properties. These results will be useful in the proof of main theorem.

Denote $[n]:= \{1,2,\cdots, n\}$.
Let $E_{pq}^{(mn)}$ denote the $(p, q)$ standard matrix in $M_{mn}$ that has the only nonzero value $1$ in the $(p,q)$ entry.
%The first two results due with linear transformations between blocks in a $2\times 2$ block matrix algebra.

\begin{lemma}\label{thm:block-tran-1}
If linear transformations $\phi:M_{mp}\to M_{mq}$ and $\varphi: M_{np}\to M_{nq}$ satisfy that
\begin{equation*}\label{block-tran-eq1}
\phi(AB)=A\varphi(B)\qquad\text{for all}\quad A\in M_{m n},\ \ B\in M_{n p},
\end{equation*}
then  there is $X\in M_{pq}$ such that
$\phi(C)=CX$ for $C\in M_{mp}$ and $\varphi(D)=DX$ for $D\in M_{np}$.
\end{lemma}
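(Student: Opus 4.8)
The plan is to extract the matrix $X$ by evaluating the hypothesis on standard matrices. Take $m=n$ (so we may use the same index set on both sides) — or more directly, work with rectangular standard matrices. Set $A = E_{pq}^{(mn)}$-type blocks; concretely, fix $i\in[m]$, $k\in[n]$ and apply the identity with $A=E_{ik}^{(mn)}$ and $B=E_{k\ell}^{(np)}$ for $\ell\in[p]$. Then $AB = E_{i\ell}^{(mp)}$, and the hypothesis gives $\phi(E_{i\ell}^{(mp)}) = E_{ik}^{(mn)}\varphi(E_{k\ell}^{(np)})$. The right-hand side is the $m\times q$ matrix whose $i$-th row equals the $k$-th row of $\varphi(E_{k\ell}^{(np)})$ and whose other rows vanish. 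In particular, the left side $\phi(E_{i\ell}^{(mp)})$ does not depend on $i$ in the sense that its (only nonzero) $i$-th row is always the $k$-th row of $\varphi(E_{k\ell}^{(np)})$, \emph{and this must be consistent across all choices of $k$}: for each fixed $\ell$, the $k$-th row of $\varphi(E_{k\ell}^{(np)})$ is the same $1\times q$ vector for every $k$. Call this vector $x_\ell\in M_{1q}$, and let $X\in M_{pq}$ be the matrix with rows $x_1,\dots,x_p$.

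Next I would verify the two asserted formulas on standard matrices and extend by linearity. From the computation above, $\phi(E_{i\ell}^{(mp)})$ is the $m\times q$ matrix with $x_\ell$ in row $i$ and zeros elsewhere, which is exactly $E_{i\ell}^{(mp)}X$. Since $\{E_{i\ell}^{(mp)}\}$ is a basis of $M_{mp}$ and both $\phi(\cdot)$ and $(\cdot)X$ are linear, $\phi(C)=CX$ for all $C\in M_{mp}$. For $\varphi$: fixing $k$ and $\ell$, the relation $\phi(E_{i\ell}^{(mp)}) = E_{ik}^{(mn)}\varphi(E_{k\ell}^{(np)})$ together with $\phi(E_{i\ell}^{(mp)})=E_{i\ell}^{(mp)}X$ shows that $E_{ik}^{(mn)}\varphi(E_{k\ell}^{(np)}) = E_{ik}^{(mn)}E_{k\ell}^{(np)}X = E_{ik}^{(mn)}(E_{k\ell}^{(np)}X)$; since left-multiplication by $E_{ik}^{(mn)}$ just reads off the $k$-th row, this says row $k$ of $\varphi(E_{k\ell}^{(np)})$ equals row $k$ of $E_{k\ell}^{(np)}X$ (namely $x_\ell$). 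To pin down the \emph{other} rows of $\varphi(E_{k\ell}^{(np)})$, apply the hypothesis with $A = E_{i k'}^{(mn)}$ for $k'\neq k$: then $AB = E_{ik'}^{(mn)}E_{k\ell}^{(np)} = 0$, so $0 = E_{ik'}^{(mn)}\varphi(E_{k\ell}^{(np)})$, forcing row $k'$ of $\varphi(E_{k\ell}^{(np)})$ to be zero for all $k'\neq k$. Hence $\varphi(E_{k\ell}^{(np)}) = E_{k\ell}^{(np)}X$, and linearity gives $\varphi(D)=DX$ for all $D\in M_{np}$.

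The only genuine subtlety — the step worth stating carefully rather than the bookkeeping — is the \emph{well-definedness of $X$}: one must check that the vector $x_\ell$ obtained as ``the $k$-th row of $\varphi(E_{k\ell}^{(np)})$'' is independent of $k$. This is where the coupling between $\phi$ and $\varphi$ is essential: the left-hand side $\phi(E_{i\ell}^{(mp)})$ is manifestly independent of $k$, while the right-hand side exhibits $x_\ell$ as its $i$-th row, so the two descriptions must agree for all $k$. (If $n=1$ this consistency check is vacuous, and if $m=0$ or $n=0$ the statement is trivial; I would implicitly assume the dimensions are positive, or note these degenerate cases separately.) Everything else is a routine linearity argument, so I would keep the write-up short: define $X$ via the rows $x_\ell$, note the well-definedness, then check both formulas on the standard basis.
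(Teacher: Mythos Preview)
Your proposal is correct and follows essentially the same approach as the paper: both arguments plug in standard matrices $E_{ik}^{(mn)}$ for $A$, read off that $\phi$ respects the row structure, and extract a single $X\in M_{pq}$ from the resulting row data. The paper's write-up is marginally more streamlined---it keeps $B$ general, observes that $\phi$ maps the first row space of $M_{mp}$ into the first row space of $M_{mq}$ (hence is right multiplication by some $X$ there), deduces $\varphi(B)=BX$ from $E_{1j}^{(mn)}\varphi(B)=E_{1j}^{(mn)}BX$ for all $j$, and then gets $\phi(C)=CX$ because the products $AB$ span $M_{mp}$---whereas you decompose $B$ into standard matrices as well and do the row-by-row bookkeeping explicitly; but the underlying idea and the key well-definedness check are the same.
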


\begin{proof}
For any  $j\in [n]$ and $B\in M_{np}$,
$$\phi(\E{1j}{mn} B)=\E{1j}{mn}\varphi(B).$$
All such $\E{1j}{mn}B$ span the first row space of $M_{mp}$.   So $\phi$ sends the first row of $M_{mp}$ to the first row of $M_{mq}$.  There exists a unique $X\in M_{pq}$ such that
$$\E{1j}{mn}\varphi(B)=\phi(\E{1j}{mn}B)=\E{1j}{mn}BX,\qquad\text{for all \ } j\in[n],\ B\in M_{np}.$$
Therefore, $\varphi(B)=BX$.  Then $\phi(AB)=A\varphi(B)=ABX$ for any $A\in M_{mn}$ and $B\in M_{np}$.  All such $AB$ span $M_{mp}$.  So
$\phi(C)=CX$ for all $C\in M_{mp}$.
\end{proof}

\begin{lemma}\label{thm:block-tran-2}
 If linear transformations $\phi:M_{mp}\to M_{np}$ and $\varphi: M_{mq}\to M_{nq}$ satisfy that
\begin{equation*}\label{block-tran-eq2}
\phi(BA)=\varphi(B)A\qquad\text{for all}\quad  A\in M_{qp},\ B\in M_{mq},
\end{equation*}
then  there is $X\in M_{nm}$ such that
$\phi(C)=XC$ for $C\in M_{mp}$ and $\varphi(D)=XD$ for $D\in M_{mq}$.
\end{lemma}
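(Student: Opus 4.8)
The plan is to mimic the proof of Lemma~\ref{thm:block-tran-1}, but working with columns instead of rows, since the new hypothesis $\phi(BA)=\varphi(B)A$ is the left-module analogue of the right-module identity treated there. Concretely, I would first restrict $A$ to range over the standard matrices $\E{j1}{qp}$ for $j\in[q]$, so that $BA=B\E{j1}{qp}$ picks out the $j$-th column of $B$ and places it in the first column, and similarly $\varphi(B)A=\varphi(B)\E{j1}{qp}$. Letting the columns of $B$ vary, the matrices $B\E{j1}{qp}$ span the first column space of $M_{mp}$; hence $\phi$ carries the first column of $M_{mp}$ into the first column of $M_{np}$, and there is a well-defined linear map on first columns, i.e.\ a matrix $X\in M_{nm}$, with $\phi\bigl(B\E{j1}{qp}\bigr)=\varphi(B)\E{j1}{qp}$ forcing $\varphi(B)\E{j1}{qp}=X B\E{j1}{qp}$ for every $j$ and every $B\in M_{mq}$.

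From that, reading off columns one at a time gives $\varphi(B)=XB$ for all $B\in M_{mq}$. I would then feed this back into the original identity: for arbitrary $A\in M_{qp}$ and $B\in M_{mq}$ we get $\phi(BA)=\varphi(B)A=XBA$. Since every matrix in $M_{mp}$ is a sum of products $BA$ (take $B$ to be a standard matrix $\E{pq'}{mq}$ and $A$ a standard matrix $\E{q'q''}{qp}$), linearity of $\phi$ yields $\phi(C)=XC$ for all $C\in M_{mp}$, which is exactly the claimed conclusion, with the same $X$ serving for both $\phi$ and $\varphi$.

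I would need to be a little careful that the transpose/duality intuition is not actually invoked; the argument should be written out directly in terms of columns so the reader need not translate. The one genuinely substantive point, rather than an obstacle, is the existence and well-definedness of $X\in M_{nm}$: one must check that the assignment sending the first column of $B\E{j1}{qp}$ (equivalently, the $j$-th column of $B$, viewed as a vector in $\F^m$) to the corresponding first column of $\phi(B\E{j1}{qp})\in\F^n$ is independent of which $B$ and which $j$ realize a given column vector, which follows because $\phi$ is linear and the identity $\phi(B\E{j1}{qp})=\varphi(B)\E{j1}{qp}$ shows the output depends only on $\varphi(B)\E{j1}{qp}$, hence ultimately one fixes $j=1$ and lets $B$ range. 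Beyond this bookkeeping the proof is routine, and in fact it is just the statement of Lemma~\ref{thm:block-tran-1} applied after transposing all matrices, a remark I would include as an alternative one-line justification.
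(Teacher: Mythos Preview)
Your argument is correct and is exactly what the paper intends: the paper's proof of this lemma is omitted with the remark that it is similar to that of Lemma~\ref{thm:block-tran-1}, and your column-by-column version (together with the transpose observation) is precisely that analogue. The well-definedness of $X$ that you flag is handled just as in Lemma~\ref{thm:block-tran-1}, so there is nothing further to add.
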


\begin{proof}
The proof (omitted) is similar to that of  Lemma \ref{thm:block-tran-1}.
\end{proof}

\begin{lemma}\label{thm:block-tran-3}
 If linear transformations $\phi:M_{mp}\to M_{mq}$ and $\varphi: M_{qn}\to M_{pn}$ satisfy that
\begin{equation*}\label{block-tran-eq3}
\phi(A)B=A\varphi(B)\qquad\text{for all}\quad A\in M_{m p},\ \ B\in M_{qn},
\end{equation*}
then  there is $X\in M_{pq}$ such that
$\phi(C)=CX$ for $C\in M_{mp}$ and $\varphi(D)=XD$ for $D\in M_{qn}$.
\end{lemma}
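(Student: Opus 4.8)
The plan is to follow the template of Lemma~\ref{thm:block-tran-1}, with the twist that here the matrix product sits to the right of $\phi$ rather than inside $\phi$. So I would first test the identity on a single standard matrix placed in the $\phi$-slot in order to pin down a candidate $X\in M_{pq}$, and then recover both formulas by a spanning argument, exactly as in the earlier lemmas.

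First, fix $i\in[m]$ and $j\in[p]$ and put $P:=\phi(\E{ij}{mp})\in M_{mq}$. The hypothesis gives $PB=\E{ij}{mp}\,\varphi(B)$ for every $B\in M_{qn}$. The $k$-th row of the right-hand side is $\delta_{ki}$ times the $j$-th row of $\varphi(B)$, so for every $B$ all rows of $PB$ except possibly the $i$-th vanish; letting $B$ run over the standard matrices $\E{kl}{qn}$ then forces every row of $P$ other than the $i$-th to be zero. Hence $\phi(\E{ij}{mp})$ has a single possibly-nonzero row, its $i$-th, which I call $v_{ij}\in\F^{q}$. Comparing the $i$-th rows of the identity $PB=\E{ij}{mp}\varphi(B)$ shows that $v_{ij}B$ is the $j$-th row of $\varphi(B)$ for all $B\in M_{qn}$; since this description of the output does not involve $i$, substituting $B=\E{kl}{qn}$ once more shows that each entry of $v_{ij}$ is independent of $i$. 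Write $v_{ij}=v_{j}$.

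Now let $X\in M_{pq}$ be the matrix whose $j$-th row is $v_{j}$ for each $j\in[p]$. A direct check shows $\E{ij}{mp}X$ is the matrix with $i$-th row $v_{j}$ and all other rows zero, which is exactly $\phi(\E{ij}{mp})$; since the matrices $\E{ij}{mp}$ span $M_{mp}$, linearity gives $\phi(C)=CX$ for all $C\in M_{mp}$. For $\varphi$, the relation ``$v_{j}B$ is the $j$-th row of $\varphi(B)$'' together with ``$v_{j}$ is the $j$-th row of $X$'' says the $j$-th row of $\varphi(B)$ equals the $j$-th row of $XB$ for every $j$ and every $B$, hence $\varphi(D)=XD$ for all $D\in M_{qn}$.

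I do not expect a genuine obstacle: the only points needing a little care are that the ``$i$-th row only'' support of $\phi(\E{ij}{mp})$ and the $i$-independence of $v_{ij}$ really do follow from the universal quantifier over $B$, and both are settled by plugging in the standard matrices $\E{kl}{qn}$ (using that a row vector annihilating all of $M_{qn}$ on the right must vanish). One could alternatively transpose the whole situation and invoke Lemma~\ref{thm:block-tran-1} or Lemma~\ref{thm:block-tran-2}, but the direct argument above is shortest and keeps the bookkeeping transparent.
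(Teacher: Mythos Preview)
Your proof is correct and follows essentially the same approach as the paper: test the identity on standard matrices $\E{ij}{mp}$ to force $\phi(\E{ij}{mp})$ into a single row, read off the rows of $X$, and recover both formulas by linearity. The only cosmetic difference is that the paper works with $i=1$ alone and deduces $\varphi(B)=XB$ first (then gets $\phi(A)=AX$ from $\phi(A)B=AXB$), whereas you treat all $i$ and establish $i$-independence explicitly; these are minor variations of the same argument.
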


\begin{proof}
For any $j\in [p]$ and  any $E_{kl}^{(qn)}\in M_{qn}$,
$$\phi(E_{1j}^{(mp)})E_{kl}^{(qn)}= E_{1j}^{(mp)}\varphi(E_{kl}^{(qn)}),$$
which shows that the only possibly nonzero row of $\phi(E_{1j}^{(mp)})$ is the first row. So $\phi$ maps the first row of $M_{mp}$
to the first row of $M_{mq}$. There exists a unique $X\in M_{pq}$ such that
$$E_{1j}^{(mp)}\varphi(E_{kl}^{(qn)})= \phi(E_{1j}^{(mp)})E_{kl}^{(qn)}=E_{1j}^{(mp)}XE_{kl}^{(qn)},\ \  \text{for all}\ j\in[p], \ \  E_{kl}^{(qn)}\in M_{qn}.$$
Therefore, $\varphi(E_{kl}^{(qn)})= XE_{kl}^{(qn)}$ for all $E_{kl}^{(qn)}\in M_{qn}$.
So $\varphi(B)= XB$ for $B\in M_{qn}$. Then $\phi(A)B= AXB$ for any $A\in M_{mp}$ and $B\in M_{qn}$.
Hence $\phi(A)= AX$ for all $A\in M_{mp}$.
\end{proof}

\begin{lemma}\label{thm:block-tran-4}
If linear transformations $f:M_{pr}\to M_{pr}$, $g:M_{pq}\to M_{pq}$, and $h:M_{qr}\to M_{qr}$ satisfy that
\begin{equation}\label{block-tran-eq4}
f(AB)=g(A)B+Ah(B) \quad \text{for all}\quad A\in M_{pq}, \ \ B\in M_{qr},
\end{equation}
then there exist $X\in M_{p}, Y\in M_{r}, Z\in M_{q}$  such that
\begin{eqnarray}
f(C)&=& XC+CY \qquad \text{for \ } C\in M_{pr},
\label{block-tran-eq4.1}
\\
g(A) &=& XA+AZ \qquad \text{for \ }  A\in M_{pq},
\label{block-tran-eq4.1'}
\\
h(B)&=& BY-ZB   \qquad    \text{for \ } B\in M_{qr}.
\label{block-tran-eq4.1''}
\end{eqnarray}
\end{lemma}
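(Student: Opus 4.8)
The plan is to reduce the three-variable Leibniz-type identity \eqref{block-tran-eq4} to two instances of the two-variable identities already established in Lemmas \ref{thm:block-tran-1} and \ref{thm:block-tran-2}, by separately ``freezing'' one of the two matrix arguments. First I would fix $B\in M_{qr}$ and consider, for each such $B$, the maps $A\mapsto f(AB)$ and $A\mapsto g(A)$. The obstruction to applying Lemma \ref{thm:block-tran-1} directly is the extra term $Ah(B)$, so the first real step is to kill it: evaluate \eqref{block-tran-eq4} at $A=E_{11}^{(pq)}$ and let $B$ range to pin down $h$ up to the correct ambiguity, or — cleaner — extract the ``inner derivation'' part of $h$ first.

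Concretely, here is the order I would carry things out. Step 1: Set $p=q=r$-free but specialize cleverly. Put $A=E_{ii}^{(pq)}$-type idempotent blocks; more efficiently, note that for fixed $A$ the map $B\mapsto f(AB)-Ah(B)$ equals $B\mapsto g(A)B$, which is left multiplication by $g(A)$, so $f(AB)=g(A)B+Ah(B)$ already says the $B$-dependence is affine. Step 2: Define $Z$ by probing $h$ on rank-one matrices. Apply \eqref{block-tran-eq4} with $A$ replaced by a product $A=A'A''$ is not available, so instead I use the following standard trick: the map $\tilde h(B):=h(B)$ need not be a derivation on its own, but writing $A=I_p$-analogue is impossible since $A$ is rectangular; instead fix $A_0=E_{1,1}^{(pq)}$ and fix $B_0=E_{1,1}^{(qr)}$, and define $X$ and $Y$ from $f$ via Lemma \ref{thm:block-tran-1} applied to the pair $(A\mapsto f(A B_0)\text{-corrected},\ A\mapsto g(A))$ after the inner part has been subtracted. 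The cleanest route: first show there is $Z\in M_q$ with $h(B)=BY-ZB$ for a suitable $Y$ by applying Lemma \ref{thm:block-tran-2} to the identity obtained from \eqref{block-tran-eq4} after left-multiplying by a fixed row selector; then substitute this form of $h$ back into \eqref{block-tran-eq4} to get $f(AB)-A\,BY = \bigl(g(A)-AZ\bigr)B$, i.e. the maps $\phi(C):=f(C)-CY$ and $\psi(A):=g(A)-AZ$ satisfy $\phi(AB)=\psi(A)B$, which is exactly the hypothesis of Lemma \ref{thm:block-tran-2} (with roles of the two block indices adjusted). Step 3: Lemma \ref{thm:block-tran-2} then yields $X\in M_p$ with $\phi(C)=XC$ and $\psi(A)=XA$, i.e. $f(C)=XC+CY$ and $g(A)=XA+AZ$. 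Step 4: Read off \eqref{block-tran-eq4.1''} by back-substitution: $f(AB)=XAB+ABY$ and $g(A)B+Ah(B)=XAB+AZB+Ah(B)$, so $ABY=AZB+Ah(B)$ for all $A,B$; choosing $A=E_{1j}^{(pq)}$ shows $h(B)=BY-ZB$, consistent with Step 2 (so no new constraint — one just needs the $Y$ here to match the $Y$ from Step 2, which it does because both are characterized by $f$).

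The main obstacle I anticipate is organizing the specializations so that the ``inner'' parameter $Z\in M_q$ is introduced cleanly and consistently on both $g$ and $h$ — the two-variable lemmas give no canonical splitting of $h$ between a ``$BY$'' term and a ``$-ZB$'' term, so one must fix a reference entry (say normalize using $E_{11}^{(pq)}$ and $E_{11}^{(qr)}$) to define $X,Y,Z$ and then check global consistency. A secondary technical point is that Lemmas \ref{thm:block-tran-1} and \ref{thm:block-tran-2} as stated require genuinely varying block sizes $m,n$; here I would apply them with the internal multiplications ranging over all of $M_{pq}$ and $M_{qr}$, spanning the relevant spaces exactly as in their proofs, so no extra hypothesis is needed. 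Once $h$ is normalized, Steps 2–4 are routine verifications of the displayed formulas \eqref{block-tran-eq4.1}–\eqref{block-tran-eq4.1''}.
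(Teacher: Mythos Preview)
Your Step~2 is the gap. You propose to first show $h(B)=BY-ZB$ ``by applying Lemma~\ref{thm:block-tran-2} to the identity obtained from \eqref{block-tran-eq4} after left-multiplying by a fixed row selector,'' but no such identity emerges: left-multiplying \eqref{block-tran-eq4} (in any reasonable sense) does not eliminate the $f$ and $g$ terms, so you never obtain a relation of the shape $\phi(BA)=\varphi(B)A$ involving $h$ alone. The two-map Lemmas~\ref{thm:block-tran-1}--\ref{thm:block-tran-3} each require one side to be a genuine product of a transformed matrix with an \emph{untouched} matrix factor; in \eqref{block-tran-eq4} all three maps are entangled and neither $g$ nor $h$ can be isolated this way before something is known about $f$. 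You recognize this as ``the main obstacle,'' but you do not resolve it, and the circularity between your Steps~2 and~4 (Step~3 needs Step~2, Step~4 re-derives Step~2) confirms that the argument as written does not close.

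The paper inverts your order: it determines $f$ first, by a direct entry computation. Evaluating \eqref{block-tran-eq4} on $A=E_{nj}^{(pq)}$, $B=E_{km}^{(qr)}$ with $j=k$ pins down every entry of $f(E_{nm}^{(pr)})$ in terms of entries of $g$ and $h$; from these one reads off explicit $X\in M_p$, $Y\in M_r$ (built from values of $f$ on standard matrices) with $f(C)=XC+CY$. Only then does the paper substitute back to get $(g(A)-XA)B=A(BY-h(B))$ and apply Lemma~\ref{thm:block-tran-3} (not Lemma~\ref{thm:block-tran-2}) to produce $Z\in M_q$ and the forms of $g$ and $h$ simultaneously. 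Your Steps~3--4 are essentially this last move and are fine once $f$ is known; the real content you are missing is the entry-by-entry determination of $f$, which cannot be replaced by a bare invocation of the two-map lemmas.
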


\begin{proof}
For any $n\in [p]$, $j, k\in [q]$, $m\in [r]$, $E^{(pq)}_{nj}\in M_{pq}$ and $E^{(qr)}_{km}\in M_{qr}$,
\begin{eqnarray}\label{block-tran-eq a}
f(E^{(pq)}_{nj}E^{(qr)}_{km})= g(E^{(pq)}_{nj})E^{(qr)}_{km}+ E^{(pq)}_{nj}h(E^{(qr)}_{km}).
\end{eqnarray}
We further discuss \eqref{block-tran-eq a} in  two cases:
\begin{enumerate}
\item $j\neq k$: the left side of \eqref{block-tran-eq a} is zero and
\begin{eqnarray}\label{block-tran-eq b}
g(E^{(pq)}_{nj})E^{(qr)}_{km}=-E^{(pq)}_{nj}h(E^{(qr)}_{km}).
\end{eqnarray}
\item $j=k$: the left side of \eqref{block-tran-eq a} is $f(E^{(pr)}_{nm})$, and according to \eqref{block-tran-eq a}, the only possibly nonzero entries of $f(E^{(pr)}_{nm})$ are
\begin{eqnarray}
f(E^{(pr)}_{nm})_{im}&=&g(E^{(pq)}_{nk})_{ik}\qquad  \text{for all}\  \  i\in [p],\  i\neq n;
\label{block-tran-eq d}
\\
f(E^{(pr)}_{nm})_{n\ell}&=&h(E^{(qr)}_{km})_{k\ell}\qquad  \text{for all}\  \ \ell\in [r],\  \ell\neq m;
\label{block-tran-eq e}
\\
f(E^{(pr)}_{nm})_{nm}&=&g(E^{(pq)}_{nk})_{nk}+ h(E^{(qr)}_{km})_{km}.
\label{block-tran-eq f}
 \end{eqnarray}
\end{enumerate}

Next we define a linear transformation  $f':M_{pr} \to M_{pr}$  such that property \eqref{block-tran-eq4} still holds. For $C\in M_{pr}$, let
\begin{equation}
f'(C):= \left [\sum_{i,j\in[p]}f(E_{j1}^{(pr)})_{i1}E_{ij}^{(pp)}\right] C+ C\left[\sum_{k, \ell\in [r]}f(E_{1k}^{(pr)})_{1\ell}E_{k\ell}^{(rr)}\right]-f(E_{11}^{(pr)})_{11}C.\end{equation}
Then for any $n\in[p]$, $m\in[r]$ and $E_{nm}^{(pr)}\in M_{pr}$,
\begin{eqnarray*}\label{block-tran-eq4.1.1}
 f'(E_{nm}^{(pr)})&= &\sum_{i\in[p]}f(E_{n1}^{(pr)})_{i1}E_{im}^{(pr)} + \sum_{\ell\in [r]}f(E_{1m}^{(pr)})_{1\ell}E_{n\ell}^{(pr)}-f(E_{11}^{(pr)})_{11} E_{nm}^{(pr)},
 \end{eqnarray*}
which implies that the only possibly nonzero entries of $f'(E_{nm}^{(pr)})$ are
 \begin{eqnarray}
 f'(E_{nm}^{(pr)})_{im}& =&f(E_{n1}^{(pr)})_{i1}= f(E^{(pr)}_{nm})_{im} \  \qquad  \ \qquad \text{for}\ i\in [p],\ i\neq n,
 \label{block-tran-eq4.4}
 \\
 f'(E_{nm}^{(pr)})_{n\ell}&=&f(E_{1m}^{(pr)})_{1\ell}= f(E^{(pr)}_{nm})_{n\ell} \ \ \qquad  \ \qquad \text{for}\ \ell\in [r], \ \ell\neq m,
 \label{block-tran-eq4.4.1}
 \\
 f'(E_{nm}^{(pr)})_{nm}&=& f(E_{n1}^{(pr)})_{n1}+ f(E_{1m}^{(pr)})_{1m}- f(E_{11}^{(pr)})_{11}= f(E^{(pr)}_{nm})_{nm},
 \label{block-tran-eq4.5}
 \end{eqnarray}
 where the last equality in \eqref{block-tran-eq4.4}, \eqref{block-tran-eq4.4.1} and \eqref{block-tran-eq4.5} is by \eqref{block-tran-eq d}, \eqref{block-tran-eq e} and \eqref{block-tran-eq f} respectively.
 Therefore, $f'=f$ on the standard  matrices of $M_{pr}$ and thus on the whole $M_{pr}$.
Denote
\begin{equation}
X:= \sum_{i,j\in[p]}f(E_{j1}^{(pr)})_{i1}E_{ij}^{(pp)}-f(E_{11}^{(pr)})_{11} I_{p},\qquad  Y:= \sum_{k, \ell\in [r]}f(E_{1k}^{(pr)})_{1\ell}E_{k\ell}^{(rr)}.
\end{equation}
We get $f(C) = f'(C)= XC + CY $ for   $C \in M_{pr}$. So  \eqref{block-tran-eq4.1} is done.
Now for $A\in M_{pq}$ and $B\in M_{qr}$,  by \eqref{block-tran-eq4},
 \begin{eqnarray*}
 g(A)B + Ah(B)=f(AB)= XAB+ABY
 \Longrightarrow (g(A)-XA)B= A(BY-h(B)).
 \end{eqnarray*}
 Applying Lemma \ref{thm:block-tran-3} to  $\phi:M_{pq}\to M_{pq}$ defined by $\phi(A)= g(A)-XA$ and $\varphi: M_{qr}\to M_{qr}$ defined by $\varphi(B)= BY-h(B)$, we will find $Z\in M_{q}$ such that
 \begin{eqnarray*}
 g(A)-XA &=& \phi(A)=AZ\qquad\text{ for } A\in M_{pq},
 \\
 BY-h(B) &=& \varphi(B)= ZB\qquad\text{ for } B\in M_{qr},
 \end{eqnarray*}
which imply \eqref{block-tran-eq4.1'} and \eqref{block-tran-eq4.1''}.
\end{proof}

\section{Derivations of $\N$ for ${\rm char}(\F)\ne 2$}

In this section, we will give an explicit description of $\Der (\N)$ for the Lie algebra $\N$ over a field $\F$ with ${\rm char}(\F)\ne 2$.
We first introduce some notations.

\begin{df} Suppose $\N$ and the conformal partition of $M_n$ are already given.
\begin{enumerate}
\item Let $\mathcal{B}$ denote the set of all block upper triangular matrices in $M_{n}$.
%\item Let $\N$ denote the set of all strictly block upper triangular matrices in $M_{n}$.
\item Let ${N}_{ij}$ denote the set of all submatrices in the $(i, j)$ block of $M_{n}$.  The $(i,j)$ block of a matrix $A\in M_n$ is denoted by $A_{ij}$ or $(A)_{ij}$.
If $A\in M_n$ is not given, $A_{ij}$ may also refer to an arbitrary matrix in $N_{ij}$.
\item Let ${\N}^{ij}$ denote the subset of $M_{n}$ consisting of matrices that take zero outside of the $(i,j)$ block.
For a matrix $B\in N_{ij}$, let $B^{ij}$ denote the embedding of $B$ into  $\N^{ij}$ by placing $B$ on the (i,j) block.
If $B\in N_{ij}$ is not given, $B^{ij}$ may also refer to an arbitrary matrix in $\N^{ij}$.
\end{enumerate}
A notation of double index, say $N_{ij}$, may be written as $N_{i,j}$ (resp. $\N^{ij}$ as $\N^{i,j}$) for clarity purpose.
\end{df}

The Lie algebra ${\mathcal B}$ is the normalizer of $\N$ in $M_n$.  For any $X\in{\mathcal B}$, the  adjoint action
$$\ad X:\N\to\N,\qquad Y\mapsto [X,Y],
$$
is a derivation of $\N$. Now we state our main theorem of this section.

\begin{theorem} \label{thm:derivation not 2}
Suppose $\rm{char}(\F)\neq 2$. Then every derivation $f$  of the Lie algebra $\N$ can be  (not uniquely) written as
\begin{eqnarray}\label{main result}
f &=& \ad{X} +\varphi_{1t} + {\phi}^{12}_{2t}+ {\phi}^{t-1,t}_{1,t-1},
\end{eqnarray}
where  the summand components are described below:
\begin{enumerate}
\item  $X\in {\mc B}$ is a block upper triangular matrix conformal to $\N$.

\item \label{def of rho} $\varphi_{1t}\in \End(\N)$ satisfies that
 $\Ker{\varphi_{1t}}$ contains $\displaystyle [\N,\N]=\bigoplus_{1<i+1<j\le t}\N^{ij}$, and
 $\Im{\varphi_{1t}}$ is contained in the center $\cen{\N}=\N^{1t}$ of $\N$.

\item ${\phi}^{12}_{2t} \equiv 0$ unless  the $(1,2)$ block  of $\N$ has only one row, in which ${\phi}^{12}_{2t}\in \Der(\N)$ such that ${\phi}^{12}_{2t}(\N^{12})\subseteq \N^{2t}$  and ${\phi}^{12}_{2t}(\N^{ij})=0$ for the other $\N^{ij}\subseteq\N$; the explicit form of ${\phi}^{12}_{2t}$ is given in Lemma \ref{lemma A12.};

\item ${\phi}^{t-1,t}_{1,t-1}\equiv 0$ unless the $(t-1,t)$ block of $\N$ has only one column, in which  ${\phi}^{t-1,t}_{1,t-1}\in \Der(\N)$ such that ${\phi}^{t-1,t}_{1,t-1}(\N^{t-1,t})\subseteq \N^{1,t-1}$  and ${\phi}^{t-1,t}_{1,t-1}(\N^{ij})=0$ for the other $\N^{ij}\subseteq\N$; the explicit form of ${\phi}^{t-1,t}_{1,t-1}$ is given in Lemma  \ref{lemma A_t-1,t.}.

\end{enumerate}

\end{theorem}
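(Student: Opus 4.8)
The plan is to extract the derivation $f$ block by block, using the grading of $\N$ by roots. Write $f(\N^{ij}) = \sum_{k\le l}(f(\N^{ij}))_{kl}$, i.e. decompose the image of each block space along the block decomposition of $M_n$. Applying $f$ to a bracket $[\N^{ij},\N^{jk}]\subseteq\N^{ik}$ and to brackets that vanish gives, for each pair of block positions, a system of functional equations of exactly the types treated in Section~2 (Lemmas~\ref{thm:block-tran-1}--\ref{thm:block-tran-4}). So the first step is to show that the components of $f$ between ``generic'' pairs of blocks are governed by left/right multiplication by fixed matrices, which will be assembled into a single $X\in\mathcal B$ so that $f-\ad X$ kills most of the structure.

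The key steps, in order. First, normalize $f$ on the diagonal-adjacent blocks $\N^{i,i+1}$: for $1<i<t-1$ the block $\N^{i,i+1}$ sits between two nontrivial bracket relations (it is both a bracket output and a bracket input with full-size neighbours), so Lemma~\ref{thm:block-tran-4} applies and the $(i,i+1)$-to-$(i,i+1)$ component of $f$ is $A\mapsto X_iA - AX_{i+1}$ for suitable square matrices $X_i$; one checks the cross-block components $(f(\N^{i,i+1}))_{kl}$ with $(k,l)\ne(i,i+1)$ must vanish by Lemmas~\ref{thm:block-tran-1}--\ref{thm:block-tran-3}, \emph{except} possibly at the two ends, $\N^{12}\to\N^{2t}$ and $\N^{t-1,t}\to\N^{1,t-1}$, which is precisely where the exceptional terms $\phi^{12}_{2t}$ and $\phi^{t-1,t}_{1,t-1}$ can appear and only when the relevant end block degenerates (one row, resp.\ one column). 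Second, set $X:=\diag(X_1,\dots,X_t)$ plus correction terms and replace $f$ by $f-\ad X$; then propagate: since $\N$ is generated as a Lie algebra by the $\N^{i,i+1}$, the value of a derivation on any $\N^{ij}$ with $j>i+1$ is forced by its values on the generators via $f([\,\cdot\,,\cdot])$, so after the normalization $f-\ad X$ vanishes on all $\N^{ij}$ with $j-i\ge 2$, and on the generators it is zero except the residual map into $\cen\N=\N^{1t}$ and the two exceptional end-maps. Third, collect the residual map on the generators landing in $\N^{1t}$: this is exactly a map killing $[\N,\N]$ with image in $\cen\N$, i.e.\ an element $\varphi_{1t}\in\End(\N)$ of type~(2); here one uses $\mathrm{char}(\F)\ne2$ to rule out the further degenerate maps that appear in Theorem~\ref{thm:derivation char 2} (the Jacobi/derivation identities for those $\psi$-type maps force a factor of $2$ to vanish). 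Fourth, verify conversely that each listed summand is a derivation (for $\ad X$ this is automatic; for $\varphi_{1t}$ because its image is central and its kernel contains $[\N,\N]$; for $\phi^{12}_{2t}$ and $\phi^{t-1,t}_{1,t-1}$ by the explicit computations in Lemmas~\ref{lemma A12.} and \ref{lemma A_t-1,t.}), so the decomposition~\eqref{main result} is both necessary and sufficient.

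I expect the main obstacle to be the careful bookkeeping at the two extreme blocks. For interior blocks the three-term relation $f(AB)=g(A)B+Ah(B)$ of Lemma~\ref{thm:block-tran-4} pins everything down cleanly, but $\N^{12}$ is only ever a bracket \emph{input} (never an output, since there is no block to its left) and $\N^{t-1,t}$ only ever a bracket \emph{output}, so the two-term relations of Lemmas~\ref{thm:block-tran-1}--\ref{thm:block-tran-3} leave genuine extra freedom: a component $\N^{12}\to\N^{2t}$ and a component $\N^{t-1,t}\to\N^{1,t-1}$ survive, and one must determine exactly when they are forced to be zero (they are, unless the $(1,2)$ block has a single row, resp.\ the $(t-1,t)$ block a single column) and, in the degenerate case, pin down their precise form so as to match Lemmas~\ref{lemma A12.} and~\ref{lemma A_t-1,t.}. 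A secondary subtlety is that the decomposition~\eqref{main result} is not unique --- $\ad X$ for $X\in\cen{\mathcal B}$ or $X$ with central image overlaps $\varphi_{1t}$ --- so one should fix a normalization (e.g.\ choose $X$ with prescribed block-diagonal and strictly-upper entries) to make the extraction well defined, but uniqueness is not claimed so this only needs enough rigidity to run the argument.
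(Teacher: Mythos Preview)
Your broad strategy matches the paper's: exploit the block grading, peel off the exceptional pieces $\phi^{12}_{2t}$, $\phi^{t-1,t}_{1,t-1}$, $\varphi_{1t}$, and absorb the rest into $\ad X$ for some $X\in\mathcal B$. But the execution has a real gap in how you build $X$.

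The claim that ``the cross-block components $(f(\N^{i,i+1}))_{kl}$ with $(k,l)\ne(i,i+1)$ must vanish by Lemmas~\ref{thm:block-tran-1}--\ref{thm:block-tran-3}'' is false. For $1<k<t-1$, Lemma~\ref{thm:f-N-k-k+1} shows $f(\N^{k,k+1})$ may have nonzero components all along the $k$-th block row and the $(k+1)$-th block column of $\N$ (plus the center). These components do not vanish; each is of the form $A\mapsto -A_{ip}X_{pq}$ or $A\mapsto X_{pq}A_{qj}$ for a fixed matrix $X_{pq}\in N_{pq}$, and these $X_{pq}$ are precisely the \emph{strictly block upper triangular} part of $X$. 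This is the content of Lemma~\ref{lemma X^{pq}}, and it is a substantial step---one must check, via Lemmas~\ref{thm:block-tran-1}--\ref{thm:block-tran-3}, that the $X_{pq}$ extracted from different bracket relations coincide---not a ``plus correction terms'' afterthought. In the paper this is done \emph{first}: one subtracts $\ad X_0$ with $X_0=\sum_{(p,q)\in\Omega}X^{pq}$ strictly block upper triangular, then the $\phi$'s and $\varphi_{1t}$, and only then is the residual $f_2$ block-preserving ($f_2(\N^{ij})\subseteq\N^{ij}$). At that point Lemma~\ref{thm:block-tran-4} is applied inductively to find the diagonal blocks $X^{ii}$. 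Your order (diagonal first) could in principle be made to work, but as written the off-diagonal part of $X$ is simply missing from the argument.

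Two smaller confusions worth fixing: no superdiagonal block $\N^{i,i+1}$ is ever a ``bracket output'', since $[\N,\N]=\bigoplus_{j>i+1}\N^{ij}$; what distinguishes the interior ones ($1<i<t-1$) is that they occur as both left and right \emph{factors} in brackets. Likewise $\N^{t-1,t}$ is only a bracket input, not an output. These slips suggest you have misidentified where Lemma~\ref{thm:block-tran-4} enters: it is applied to the block-preserving residual $f_2$, not directly to $f$.
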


The cases $t=1$ and $t=2$ are trivial.  So we assume $t\geq 3$ in the following discussion.
Before proving the main theorem, we present several auxiliary results on the images $f(\N^{ij})$ for $f\in\Der(\N)$ and $\N^{ij}\subseteq\N$.
The next lemma concerns the range of $f$ on superdiagonal blocks of $\N$ except for $\N^{12}$ and $\N^{t-1,t}$.

\begin{lemma}\label{thm:f-N-k-k+1}
For $f\in \Der({\N})$ and $1<k<t-1$,
\begin{equation}\label{eq for (k,k+1)}
f(\N^{k,k+1}) \subseteq \sum_{p=1}^{k-1}{\N^{p,k+1}}+\sum_{q=k+1}^{t}\N^{kq}+\cen{\N}.
\end{equation}
In other words, the image  $f(\N^{k,k+1})$ is located on the $k$-th block row and the $(k+1)$-th block column of $\N$ as well as in the center $\cen{\N}=\N^{1t}$.
\end{lemma}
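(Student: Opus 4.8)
The plan is to fix $A\in\N^{k,k+1}$, expand $f(A)=\sum_{1\le i<j\le t}F_{ij}$ with each $F_{ij}\in\N^{ij}$, and show that $F_{ij}=0$ for every block $(i,j)$ with $i\ne k$, $j\ne k+1$ and $(i,j)\ne(1,t)$; call such a block \emph{bad}. The blocks that are \emph{not} bad are precisely the ones appearing on the right-hand side of \eqref{eq for (k,k+1)} (namely $\N^{kq}$ for $q>k$, $\N^{p,k+1}$ for $1\le p<k$, and $\cen{\N}=\N^{1t}$), so killing all bad components of $f(A)$ for an arbitrary $A\in\N^{k,k+1}$ is exactly the assertion.

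The engine of the argument is an elementary block computation: for $A\in\N^{k,k+1}$ and any $C\in\N$, the product $AC$ is supported on the $k$-th block row $\mathcal R_k:=\bigoplus_{q>k}\N^{kq}$ and $CA$ on the $(k+1)$-st block column $\mathcal C_{k+1}:=\bigoplus_{p<k+1}\N^{p,k+1}$, so $[\N^{k,k+1},\N]\subseteq\mathcal R_k+\mathcal C_{k+1}$. Consequently, whenever $C\in\N$ satisfies $[A,C]=0$, applying $f$ to $f([A,C])=[f(A),C]+[A,f(C)]$ gives $[f(A),C]=-[A,f(C)]\in\mathcal R_k+\mathcal C_{k+1}$; hence every block component of $[f(A),C]$ lying outside $\mathcal R_k\cup\mathcal C_{k+1}$ must vanish. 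The idea is to pick $C$ so that $[f(A),C]$ isolates one component $F_{i_0,j_0}$ times an arbitrary block matrix, which then forces $F_{i_0,j_0}=0$.

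I would then argue according to a bad block $(i_0,j_0)$. If $j_0<t$, take $C\in\N^{j_0,t}$: one has $[A,C]=0$ (because $j_0\ne k+1$ and $t\ne k$, the latter by $k<t-1$), and moreover $Cf(A)=0$ while the $(p,t)$-block of $f(A)C$ equals $F_{p,j_0}\,C_{j_0,t}$ for $p<j_0$. Since $i_0\ne k$ and $t\ne k+1$, the block $(i_0,t)$ lies outside $\mathcal R_k\cup\mathcal C_{k+1}$, so $F_{i_0,j_0}\,C_{j_0,t}=0$ for all choices of $C_{j_0,t}$, whence $F_{i_0,j_0}=0$. If instead $j_0=t$, then badness forces $i_0\notin\{1,k\}$, in particular $i_0\ge2$; take $C\in\N^{1,i_0}$: one has $[A,C]=0$ (because $i_0\ne k$), and now $f(A)C=0$ while the $(1,q)$-block of $Cf(A)$ equals $C_{1,i_0}\,F_{i_0,q}$ for $q>i_0$. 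Since $(1,t)\notin\mathcal R_k\cup\mathcal C_{k+1}$ and $t>i_0$, we get $C_{1,i_0}\,F_{i_0,t}=0$ for all choices of $C_{1,i_0}$, whence $F_{i_0,t}=0$. These two cases exhaust the bad blocks, and as $A$ was arbitrary, \eqref{eq for (k,k+1)} follows.

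I expect the only delicate point to be the case $j_0=t$: the test block $\N^{j_0,t}$ that works in general no longer exists, so one must multiply on the other side, and this succeeds precisely because $i_0\ne1$ — that is, because the center $\N^{1t}$ is explicitly permitted in \eqref{eq for (k,k+1)}. The hypotheses $1<k<t-1$ enter only to guarantee $1,t\notin\{k,k+1\}$, which is what makes the chosen $C$'s commute with $A$ and places the target blocks $(i_0,t)$ and $(1,t)$ outside the $k$-th block row and the $(k+1)$-st block column; all remaining verifications are routine block bookkeeping.
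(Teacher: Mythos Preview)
Your proof is correct and follows essentially the same approach as the paper: for a bad block $(i_0,j_0)$ you bracket $A\in\N^{k,k+1}$ with a test element $C\in\N^{j_0,t}$ (when $j_0<t$) or $C\in\N^{1,i_0}$ (when $j_0=t$, $i_0>1$), observe that $[A,C]=0$, and read off $F_{i_0,j_0}=0$ from the derivation identity by isolating a single block. The paper does exactly this, only more tersely---it computes the relevant block entry directly rather than passing through the observation $[A,f(C)]\subseteq\mathcal R_k+\mathcal C_{k+1}$, and it dispatches the second case with a ``similarly for $i>1$''---but the mechanism is identical.
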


\begin{proof}
Given any $ A^{k,k+1}\in \N^{k,k+1}$, it suffices to prove that $f(A^{k,k+1})_{ij}=0$ for $i<j$, $i \neq k$, $j\neq k+1$, and $(i,j)\neq (1,t)$. Either $i>1$ or $j<t$. Without loss of generality, suppose $j<t$ (similarly for $i>1$). Then for any $A^{jt}\in\N^{jt}$, the $(i,t)$ block of $f([A^{k,k+1}, A^{jt}])$ is
\begin{align*}
0=f([A^{k,k+1}, A^{jt}])_{it}&=[f(A^{k,k+1}), A^{jt}]_{it}+[A^{k,k+1}, f(A^{jt})]_{it}
= f(A^{k,k+1})_{ij}(A^{jt})_{jt}
\end{align*}
where the last equality is by the assumptions on $i, j$.
Therefore   $f(A^{k,k+1})_{ij}=0$.
\end{proof}

Now consider the range of $f$  on $\N^{12}$ and $\N^{t,t-1}$ for $f\in\Der(\N)$.
The case ${\rm char}(\F)\ne 2$  would be simpler in the following lemma.

\begin{lemma}\label{image of (12)}
Let $f\in \Der(\N)$.
Then  the image $f(\N^{12})$ is located on the first block row, the $(2,t)$ block, and the $(3,t)$ block of $\N$; and  $f(\N^{t-1,t})$ is  on the last block column, the $(1,t-1)$ block, and the $(1,t-2)$ block of $\N$:
  \begin{eqnarray}\label{eq for (12)}
  f(\N^{12})&\subseteq & \sum_{q=2}^{t}{\N}^{1q}+{\N}^{2t}+{\N}^{3t},
\\
\label{eq for (t-1,t)}
  f(\N^{t-1,t}) &\subseteq & \sum_{p=1}^{t-1} {\N}^{pt}+{\N}^{1,t-1}+{\N}^{1,t-2}.
  \end{eqnarray}
Furthermore, when ${\rm char}(\F)\ne 2$, the $(3,t)$ block of  $f(\N^{12})$ and the $(1,t-2)$ block of $f(\N^{t-1,t})$ are zero.
\end{lemma}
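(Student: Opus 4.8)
First I would halve the work by symmetry. The anti-automorphism $X\mapsto JX^{\mathrm{T}}J$ of $M_n$, where $J$ is the permutation matrix reversing $[n]$, carries $\N$ onto the Lie algebra of strictly block upper triangular matrices for the reversed partition, carries $\Der(\N)$ onto the derivation algebra of that Lie algebra, and interchanges the $(1,2)$ block with the $(t-1,t)$ block, the $(3,t)$ block with the $(1,t-2)$ block, the first block row with the last block column, and $\N^{2t}$ with $\N^{1,t-1}$. So it suffices to prove \eqref{eq for (12)} and, when $\rm{char}(\F)\ne2$, that the $(3,t)$ block of $f(\N^{12})$ vanishes; the cases $t\le 3$ are trivial (for $t=3$, $\N=\N^{12}\oplus\N^{13}\oplus\N^{23}$ and there is no $(3,t)$ block), so I assume $t\ge4$ below.

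\emph{Step 1 (the inclusion).} Fix $A^{12}\in\N^{12}$. Since $f(A^{12})\in\N$ is strictly block upper triangular, it suffices to show $f(A^{12})_{ij}=0$ whenever $2\le i<j\le t$ and $(i,j)\notin\{(2,t),(3,t)\}$, which I would do by the method of Lemma~\ref{thm:f-N-k-k+1}. If $j<t$ then $j\ge3$, so $[A^{12},\N^{jt}]=0$; reading the $(i,t)$ block of $f([A^{12},B^{jt}])=[f(A^{12}),B^{jt}]+[A^{12},f(B^{jt})]$ reduces it to $0=f(A^{12})_{ij}\,(B^{jt})_{jt}$ for all $B^{jt}$, hence $f(A^{12})_{ij}=0$. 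If $j=t$ and $i\ge4$, choose $\ell$ with $3\le\ell<i$; then $[A^{12},\N^{\ell i}]=0$, and reading the $(\ell,t)$ block of the analogous identity reduces it to $0=(B^{\ell i})_{\ell i}\,f(A^{12})_{it}$, hence $f(A^{12})_{it}=0$. (The hypothesis $\ell\ge3$ makes the $[A^{12},f(B^{\ell i})]$ term drop out, and $\ell<i$ forces $i\ge4$, which is why precisely $(2,t)$ and $(3,t)$ can survive.) This gives \eqref{eq for (12)}, and then \eqref{eq for (t-1,t)} follows by the symmetry above.

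\emph{Step 2 (the $(3,t)$ block vanishes for $\rm{char}(\F)\ne2$).} Define linear maps $g\colon N_{12}\to N_{3t}$, $g(A_{12})=f(A^{12})_{3t}$, and $h\colon N_{13}\to N_{2t}$, $h(Z)=f(Z^{13})_{2t}$; the goal is $g\equiv0$. Using $t\ge4$ (so that $\N^{13}$ has no block in column $t$), I would read off two identities from the derivation rule: from $[A^{12},B^{13}]=0$, the $(1,t)$ block gives
\[
B_{13}\,g(A_{12})=A_{12}\,h(B_{13}),
\]
and from $[A^{12},B^{23}]=(A_{12}B_{23})^{13}$, the $(2,t)$ block gives
\[
h(A_{12}B_{23})=-\,B_{23}\,g(A_{12}),
\]
for all $A_{12}\in N_{12}$, $B_{13}\in N_{13}$, $B_{23}\in N_{23}$. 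Substituting $B_{13}=A'_{12}B'_{23}$ into the first and invoking the second (with $A'_{12}$ in place of $A_{12}$) gives $(A'_{12}B'_{23})\,g(A_{12})=-(A_{12}B'_{23})\,g(A'_{12})$; specializing $A'_{12}=A_{12}$ gives $2\,(A_{12}B'_{23})\,g(A_{12})=0$, so $(A_{12}B'_{23})\,g(A_{12})=0$ for all $B'_{23}$ because $\rm{char}(\F)\ne2$. Picking $B'_{23}$ with $A_{12}B'_{23}=vw$, where $v$ is a fixed nonzero vector in the column space of $A_{12}$ (possible unless $A_{12}=0$) and $w$ is an arbitrary row vector, forces $w\,g(A_{12})=0$ for all $w$, hence $g(A_{12})=0$; thus $g\equiv0$, and the statement for $f(\N^{t-1,t})$ follows by symmetry.

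\emph{The main obstacle.} Step 1 is routine. The real content is Step 2, and the crux is to hit on the right pair of brackets—$A^{12}$ against $\N^{13}$ and against $\N^{23}$—and then to see that feeding one resulting identity into the other along the diagonal $A'_{12}=A_{12}$ collapses everything to a relation of the shape $2(\cdots)=0$. That diagonal substitution is the only place where $\rm{char}(\F)\ne2$ enters; in characteristic $2$ only the symmetric relation $(A'_{12}B'_{23})g(A_{12})=(A_{12}B'_{23})g(A'_{12})$ remains, which is precisely the slack that produces the extra derivations $\psi^{12;13}_{3t;2t}$ and $\psi^{t-1,t;t-2,t}_{1,t-2;1,t-1}$.
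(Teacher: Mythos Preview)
Your proof is correct and Step~1 is essentially identical to the paper's argument (the paper fixes $\ell=3$, you allow any $3\le\ell<i$, which is the same). Step~2 reaches exactly the same key relation $2\,(A_{12}B_{23})\,g(A_{12})=0$ as the paper, but the paper obtains it in one stroke from the double bracket
\[
0=f\bigl([A^{12},[A^{12},A^{23}]]\bigr)_{1t}=-2\,(A^{12})_{12}(A^{23})_{23}\,f(A^{12})_{3t},
\]
whereas you first split it into the two single-bracket identities $B_{13}\,g(A_{12})=A_{12}\,h(B_{13})$ and $h(A_{12}B_{23})=-B_{23}\,g(A_{12})$ and then recombine. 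Your route is slightly longer but has the merit of exposing the auxiliary map $h$ and the symmetric relation that survives in characteristic~$2$; the paper's route is quicker but hides that structure. Your explicit transpose-reversal symmetry at the start formalizes what the paper dismisses with ``the proofs \ldots\ are similar.''
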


\begin{proof}
 The case $t=3$ is obviously true. We now assume that $t\geq 4$.

To get \eqref{eq for (12)},  we prove that $f(A^{12})_{ij}=0$ for any $A^{12}\in\N^{12}$, $1<i<j$, and $(i,j)\notin\{(2,t), (3,t)\}$. Either $i>3$ or $j<t$.
\begin{enumerate}
  \item Suppose $j<t$. Then for any  $A^{jt}\in {\N}^{jt}$,
\begin{align*}
0=f([A^{12}, A^{jt}])_{it}=[f(A^{12}), A^{jt}]_{it}+[A^{12}, f(A^{jt})]_{it}.
\end{align*}
Therefore, $0=f(A^{12})_{ij}(A^{jt})_{jt}$, and thus $f(A^{12})_{ij}=0$.
  \item Suppose $3<i$. Then for any $A^{3i}\in {\N}^{3i}$,
  \begin{align*}
0=f([A^{12}, A^{3i}])_{3j}=[f({A^{12}}), {A^{3i}}]_{3j}+[{A^{12}}, f({A^{3i}})]_{3j}.
\end{align*}
Therefore, $0=-(A^{3i})_{3i}f(A^{12})_{ij}$, which implies that $f({A^{12}})_{ij}=0$.
\end{enumerate}
Overall, \eqref{eq for (12)} is done.

Next, when  $\rm{char}({\mathbb F})\neq 2$,  we show that $f({A^{12}})_{3t}=0$. For any $A^{23}\in \N^{23}$,
\begin{eqnarray*}
0&=& f([{A^{12}}, [{A^{12}}, {A^{23}}]])_{1t}
\\
&=& [f({A^{12}}), [{A^{12}}, {A^{23}}]]_{1t}+[{A^{12}}, [f({A^{12}}), {A^{23}}]]_{1t}
 + [{A^{12}}, [{A^{12}}, f({A^{23}})]]_{1t}
\\
&=& -2 (A^{12})_{12}(A^{23})_{23} f({A^{12}})_{3t}.
\end{eqnarray*}
Since $\rm{char{\mathbb{(F)}}}\neq 2$, $0=(A^{12})_{12}(A^{23})_{23}f({A^{12}})_{3t}$.
Given $A^{12}$, the matrix $(A^{12})_{12}(A^{23})_{23}$ for any  $A^{23}\in \N^{23}$ could be any matrix in $N_{13}$ with rank no more than  $\rank A^{12}$.
Therefore $f({A^{12}})_{3t}=0$.

The proofs of \eqref{eq for (t-1,t)} and $f(\N^{t-1,t})_{1,t-2}=0$ when $\rm{char{\mathbb{(F)}}}\neq 2$ are similar.
\end{proof}

\begin{df}
Let $E^{ij}_{pq}$   denote the $(p,q)$ standard matrix in $\N^{ij}$. In other words, $E^{ij}_{pq}$ is the matrix in $M_n$ that has the only nonzero entry 1 in the $(p,q)$ position of the $(i,j)$ block of $M_n$.
\end{df}

The next two lemmas explicitly describe the $(2,t)$ block of $f(\N^{12})$ and the $(1,t-1)$ block of $f(\N^{t-1,t})$ for $f\in \Der(\N)$.

\begin{lemma}\label{lemma A12.}
Suppose  the $(1,2)$ block of $\N$ has the size $p\times q$.  For $f\in \Der(\N)$, the image $f(\N^{12})_{2t}$ has the following properties:
\begin{enumerate}
\item If $p\geq 2$, then $f(\N^{12})_{2t} = 0$.
\item If $p=1$, then $\{E^{12}_{1i}\mid i\in[q]\}$ is a basis of $\N^{12}$;
the $i$-th row of $f(E^{12}_{1j})_{2t}$ and the $j$-th row of $f(E^{12}_{1i})_{2t}$ are equal for any $i,j\in[q]$.
\end{enumerate}
Conversely, any   $f\in\End(\N)$ that satisfies $f(\N^{ij})=0$ for $\N^{ij}\subseteq\N$ with $(i,j)\ne (1,2)$, $f(\N^{12})\subseteq\N^{2t}$,
and the above hypothesis, is in $\Der(\N)$.
\end{lemma}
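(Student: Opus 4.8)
The plan is to derive everything from the single identity obtained by applying $f$ to the vanishing bracket $[A^{12},B^{12}]=0$ for $A^{12},B^{12}\in\N^{12}$; this bracket is zero because any product of two matrices supported on the $(1,2)$ block vanishes. The derivation rule then gives $[f(A^{12}),B^{12}]+[A^{12},f(B^{12})]=0$, and I would read off its $(1,t)$ block. Because $A^{12}$ and $B^{12}$ are supported on the $(1,2)$ block and $t\ge 3$, a direct block multiplication shows that $(f(A^{12})B^{12})_{1t}=0=(f(B^{12})A^{12})_{1t}$ while $(B^{12}f(A^{12}))_{1t}=(B^{12})_{12}\,f(A^{12})_{2t}$ and $(A^{12}f(B^{12}))_{1t}=(A^{12})_{12}\,f(B^{12})_{2t}$, so the identity collapses to
$$
(A^{12})_{12}\,f(B^{12})_{2t} \;=\; (B^{12})_{12}\,f(A^{12})_{2t}.
$$

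For the forward direction I would then specialize $A^{12}=E^{12}_{k\ell}$ and $B^{12}=E^{12}_{k'\ell'}$, turning the displayed identity into $E^{(pq)}_{k\ell}\,f(E^{12}_{k'\ell'})_{2t}=E^{(pq)}_{k'\ell'}\,f(E^{12}_{k\ell})_{2t}$. The left-hand side is the matrix whose $k$-th row equals the $\ell$-th row of $f(E^{12}_{k'\ell'})_{2t}$ and whose other rows vanish, and symmetrically on the right. If $p\ge 2$, taking $k\ne k'$ forces both sides to vanish, whence the $\ell'$-th row of $f(E^{12}_{k\ell})_{2t}$ is zero for every $\ell'$; as $k,\ell$ were arbitrary, $f(\N^{12})_{2t}=0$. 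If $p=1$ then $k=k'=1$, the standard matrices $E^{12}_{1i}$, $i\in[q]$, obviously form a basis of $\N^{12}$, and the identity says exactly that the $\ell$-th row of $f(E^{12}_{1\ell'})_{2t}$ equals the $\ell'$-th row of $f(E^{12}_{1\ell})_{2t}$, which is the asserted symmetry.

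For the converse, suppose $f\in\End(\N)$ satisfies the stated hypotheses. First note that no bracket of two standard matrices of $\N$ can land in the $(1,2)$ block, since that would require an index strictly between $1$ and $2$; hence $f$ annihilates $[\N,\N]$, and it suffices to prove $[f(U),V]+[U,f(V)]=0$ for all $U,V\in\N$. By bilinearity I may take $U,V$ to be standard matrices, and since $f$ vanishes outside $\N^{12}$ the identity is trivial unless one of $U,V$ lies in $\N^{12}$. If $U=A^{12}\in\N^{12}$ and $V$ is a standard matrix of some $\N^{k\ell}$ with $(k,\ell)\ne(1,2)$, then $f(V)=0$, and $[f(A^{12}),V]=0$ because $f(A^{12})\in\N^{2t}$ makes both products vanish: a nonzero $f(A^{12})V$ would need $k=t$, impossible since $k<\ell\le t$, and a nonzero $Vf(A^{12})$ would need $\ell=2$, forcing $(k,\ell)=(1,2)$. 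If instead $U=A^{12}$ and $V=B^{12}$ are both in $\N^{12}$, the same $(1,t)$-block computation as above reduces $[f(A^{12}),B^{12}]+[A^{12},f(B^{12})]=0$ to $(A^{12})_{12}f(B^{12})_{2t}=(B^{12})_{12}f(A^{12})_{2t}$, which holds by the symmetry hypothesis when $p=1$ and trivially when $p\ge2$ (the hypothesis then forces $f\equiv0$ on $\N^{12}$).

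I do not expect a genuine obstacle; the argument is bookkeeping with block products. The only subtlety is the dichotomy $p\ge2$ versus $p=1$, which is precisely where the one-row structure of the $(1,2)$ block enters, and one must keep invoking $t\ge3$ so that the $(1,t)$ block is distinct from the $(1,2)$ block and the relevant off-diagonal products behave as claimed.
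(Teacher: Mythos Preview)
Your proof is correct and follows essentially the same approach as the paper: both derive the key identity $(A^{12})_{12}\,f(B^{12})_{2t}=(B^{12})_{12}\,f(A^{12})_{2t}$ by applying the derivation rule to $[A^{12},B^{12}]=0$ and reading the $(1,t)$ block, then specialize to standard matrices to obtain the dichotomy. Your treatment of the converse is more detailed than the paper's (which simply says it is ``easy to verify''); the only point worth tightening is that when both $U,V\in\N^{12}$ you should note explicitly that $[f(A^{12}),B^{12}]+[A^{12},f(B^{12})]$ is supported entirely in $\N^{1t}$ (since $\N^{2t}\cdot\N^{12}=0$ and $\N^{12}\cdot\N^{2t}\subseteq\N^{1t}$), so that checking the $(1,t)$ block suffices.
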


\begin{proof} Let $f\in \Der(\N)$.
\begin{enumerate}
\item When $p\geq 2$, it suffices to prove that $f(E^{12}_{ij})_{2t}=0$ for any
 $i\in [p], j\in [q]$. Since $p\geq 2$, we can choose
 $r\in [p]-\{i\}$. Then for any
$s\in [q]$, $E^{12}_{rs}\in {\N}^{12}$ and
\begin{eqnarray*}
0=f([E^{12}_{rs}, E^{12}_{ij}])_{1t}&=&[f(E^{12}_{rs}), E^{12}_{ij}]_{1t}
            +[E^{12}_{rs}, f(E^{12}_{ij})]_{1t}\\
            &=&-(E^{12}_{ij})_{12}f(E^{12}_{rs})_{2t}+ (E^{12}_{rs})_{12}f(E^{12}_{ij})_{2t}.
\end{eqnarray*}
Therefore, $(E^{12}_{rs})_{12}f(E^{12}_{ij})_{2t}=(E^{12}_{ij})_{12}f(E^{12}_{rs})_{2t}$.
Comparing the $r$-th rows on both sides, we see that
 the $s$-th row of $f(E^{12}_{ij})_{2t}$ is zero. Since
 $s\in [q]$ is arbitrary,   we have $f(E^{12}_{ij})_{2t}=0$.

\item
Suppose $p=1$.  The case $q=1$ is trivial. Now we assume that $q\geq 2$. For any $j\in [q]$, we can choose $i\in[q]-\{j\}$. Then
\begin{eqnarray*}
0=f([E^{12}_{1j},  E^{12}_{1i}])_{1t}&=&[f(E^{12}_{1j}), E^{12}_{1i}]_{1t}+[E^{12}_{1j}, f(E^{12}_{1i})]_{1t}
\\
&=&-(E^{12}_{1i})_{12}f(E^{12}_{1j})_{2t}+(E^{12}_{1j})_{12}f(E^{12}_{1i})_{2t}.
\end{eqnarray*}
Therefore,
\begin{eqnarray*}
(E^{12}_{1i})_{12}f(E^{12}_{1j})_{2t}=(E^{12}_{1j})_{12}f(E^{12}_{1i})_{2t}.
\end{eqnarray*}
Comparing the first rows, we see that the $i$-th row of $f(E^{12}_{1j})_{2t}$ is equals to the $j$-th row of $f(E^{12}_{1i})_{2t}$ for $i\neq j$.
\end{enumerate}

The last statement is easy to verify.
\end{proof}

\begin{lemma}\label{lemma A_t-1,t.}
Suppose the $(t-1,t)$ block of $\N$ has the size $p \times q$. For  $f\in \Der(\N)$,
the image $f(\N^{t-1,t})_{1,t-1}$ satisfies the following properties:
\begin{enumerate}
\item If $q\geq 2$, then
$f(\N^{t-1,t})_{1,t-1} = 0$.
\item If $q=1$, then $\{E^{t-1,t}_{i1}\mid i\in [p]\}$ is a basis of $\N^{t-1,t}$;
the $i$-th column of $f(E^{t-1,t}_{j1})_{1,t-1}$ and the $j$-th column of $f(E^{t-1,t}_{i1})_{1,t-1}$ are equal for any $i,j\in [p]$.
\end{enumerate}
Conversely, any  $f\in\End(\N)$ that satisfies $f(\N^{ij})=0$ for $\N^{ij}\subseteq\N$ and $(i,j)\ne (t-1,t)$, $f(\N^{t-1,t})\subseteq\N^{1,t-1}$,
and the above hyposthesis, is in $\Der(\N)$.
\end{lemma}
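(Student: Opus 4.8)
The plan is to run the argument of Lemma~\ref{lemma A12.} in transpose: wherever that proof uses the first block row, left multiplication, and comparison of rows, I would use the last block column, right multiplication, and comparison of columns. Fix $f\in\Der(\N)$. By \eqref{eq for (t-1,t)} of Lemma~\ref{image of (12)}, every $f(E^{t-1,t}_{rs})$ lies on the last block column together with the $(1,t-1)$ and $(1,t-2)$ blocks; only its $(1,t-1)$ piece will matter. Since $t\ge 3$, any two matrices of $\N^{t-1,t}$ have zero product, so $[E^{t-1,t}_{rs},E^{t-1,t}_{ij}]=0$, and applying $f$ and reading off the $(1,t)$ block gives
\begin{equation*}
0=[f(E^{t-1,t}_{rs}),E^{t-1,t}_{ij}]_{1t}+[E^{t-1,t}_{rs},f(E^{t-1,t}_{ij})]_{1t}.
\end{equation*}
Since $t\ge 3$, left multiplication by $E^{t-1,t}_{rs}$ or $E^{t-1,t}_{ij}$ cannot reach the first block row, so the cross terms $E^{t-1,t}_{rs}f(E^{t-1,t}_{ij})$ and $E^{t-1,t}_{ij}f(E^{t-1,t}_{rs})$ contribute nothing to block $(1,t)$, and the identity collapses to
\begin{equation*}
f(E^{t-1,t}_{ij})_{1,t-1}\,(E^{t-1,t}_{rs})_{t-1,t}=f(E^{t-1,t}_{rs})_{1,t-1}\,(E^{t-1,t}_{ij})_{t-1,t},
\end{equation*}
the transpose of the identity $(E^{12}_{rs})_{12}f(E^{12}_{ij})_{2t}=(E^{12}_{ij})_{12}f(E^{12}_{rs})_{2t}$ used in Lemma~\ref{lemma A12.}.

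From this, the two cases follow by comparing columns. For (1), with $q\ge 2$ pick $s\in[q]\setminus\{j\}$: right multiplication by $(E^{t-1,t}_{rs})_{t-1,t}$ moves the $r$-th column of $f(E^{t-1,t}_{ij})_{1,t-1}$ into the $s$-th column slot, while the right-hand side has zero $s$-th column because $j\ne s$; letting $r$ run over $[p]$ yields $f(E^{t-1,t}_{ij})_{1,t-1}=0$, hence $f(\N^{t-1,t})_{1,t-1}=0$. For (2), $q=1$ makes $\{E^{t-1,t}_{i1}\mid i\in[p]\}$ a basis; the subcase $p=1$ is vacuous, and for $p\ge 2$ the identity with $j=s=1$ reads off directly that the $i$-th column of $f(E^{t-1,t}_{j1})_{1,t-1}$ equals the $j$-th column of $f(E^{t-1,t}_{i1})_{1,t-1}$ whenever $i\ne j$, the case $i=j$ being trivial.

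For the converse, note first that $[\N,\N]=\bigoplus_{1<i+1<j\le t}\N^{ij}$ has no $\N^{t-1,t}$ component, so $f([Y,Z])=0$ for all $Y,Z\in\N$, and it remains to check $[f(Y),Z]+[Y,f(Z)]=0$. Since $f$ is supported on $\N^{t-1,t}$ with $\Im f\subseteq\N^{1,t-1}$, a short block computation using strict block upper-triangularity shows that both brackets land in $\N^{1t}$ and depend only on the $(t-1,t)$ pieces of $Z$ and of $Y$; expanding everything in the basis $\{E^{t-1,t}_{i1}\}$, the identity reduces exactly to: the $l$-th column of $f(E^{t-1,t}_{k1})_{1,t-1}$ equals the $k$-th column of $f(E^{t-1,t}_{l1})_{1,t-1}$ for all $k,l$. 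This is the hypothesis in (2), and it is automatic when $q\ge 2$ (there the hypothesis forces $f\equiv 0$).

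I do not anticipate a genuine difficulty; the only thing requiring care is the block-multiplication bookkeeping ensuring the $(1,t)$-block identities come out right—in particular that the vanishing terms really vanish, which is where $t\ge 3$ and the inclusion \eqref{eq for (t-1,t)} are used, and that ``right-multiply, compare columns'' is the faithful transpose of ``left-multiply, compare rows''. Alternatively, the entire lemma can be deduced from Lemma~\ref{lemma A12.} by conjugating $f$ with the anti-automorphism $A\mapsto JA^{\mathsf T}J$ of $M_n$ ($J$ the reversal permutation matrix): this carries $\N$ with partition $(n_1,\dots,n_t)$ to $\N$ with the reversed partition, swaps $\N^{12}\leftrightarrow\N^{t-1,t}$ and $\N^{2t}\leftrightarrow\N^{1,t-1}$, and fixes $\N^{1t}$, turning derivations into derivations; then only that last verification is needed.
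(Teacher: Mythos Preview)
Your proposal is correct and is exactly the argument the paper has in mind: the paper's own proof of this lemma consists solely of the sentence ``The proof (omitted) is similar to that of Lemma~\ref{lemma A12.}'', and you have carried out precisely that transposed argument (last block column, right multiplication, column comparison) together with the straightforward verification of the converse. The alternative you sketch via the anti-automorphism $A\mapsto JA^{\mathsf T}J$ is a clean way to formalize the duality and would also be acceptable; note, incidentally, that your appeal to \eqref{eq for (t-1,t)} is not actually needed, since the vanishing of the cross terms in the $(1,t)$ block follows already from $t\ge 3$ and the block position of $E^{t-1,t}_{rs}$, just as in the paper's proof of Lemma~\ref{lemma A12.}.
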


\begin{proof}
The proof (omitted) is similar to that of Lemma \ref{lemma A12.}.
\end{proof}

Next we consider the range of  $f$  on the other blocks of $\N$.

\begin{lemma}\label{image of (i,j) block}
 For $f\in \Der(\N)$, $i,j\in[t]$ and $j>i+1$, the image $f(\N^{ij})$   satisfies that:
 \begin{enumerate}
 \item
 If $\rm{char}({\mathbb{F}})\neq 2$, then
\begin{eqnarray}\label{eq for (ij)}
 f({\N^{ij}}) &\subseteq& \sum_{p=1}^{i-1}{{\mathcal{N}^{pj}}}+\sum_{q=j}^{t}{{\mathcal{N}^{iq}}}.
\end{eqnarray}

\item
If $\rm{char}({\mathbb{F}})= 2$, then \eqref{eq for (ij)} still holds for $(i,j)\not\in\{(1,3), (t-2,t)\}$, and
\begin{eqnarray}
\label{eq for (13)}
f(\N^{13})
&\subseteq&
\sum_{q=3}^{t} \N^{1q}+\N^{2t},
\\
\label{eq for (t-2,t)}
f(\N^{t-2,t})
&\subseteq&
\sum_{p=1}^{t-2}  \N^{pt} +\N^{1,t-1}.
\end{eqnarray}
\end{enumerate}

\end{lemma}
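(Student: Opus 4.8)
The plan is to mimic the arguments of Lemmas \ref{thm:f-N-k-k+1} and \ref{image of (12)}: to show that a given block $f(A^{ij})_{k\ell}$ vanishes, I will find an auxiliary matrix $A^{j\ell}\in\N^{j\ell}$ (or $A^{ki}\in\N^{ki}$) so that the Leibniz rule applied to $[A^{ij},A^{j\ell}]$ (resp. $[A^{ki},A^{ij}]$) forces the offending block to be zero. Concretely, to prove \eqref{eq for (ij)} I fix $A^{ij}\in\N^{ij}$ and a pair $(k,\ell)$ with $k<\ell$, $k\ne i$, $\ell\ne j$, and I want $f(A^{ij})_{k\ell}=0$. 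Since $j>i+1$, at least one of the two ``escape routes'' is available: either $\ell<t$, in which case I pick $A^{\ell t}\in\N^{\ell t}$ and read off the $(k,t)$ block of $0=f([A^{ij},A^{\ell t}])=[f(A^{ij}),A^{\ell t}]+[A^{ij},f(A^{\ell t})]$; the second term contributes nothing to the $(k,t)$ block because $f(A^{\ell t})$ lives on the $\ell$-th block row and $t$-th block column plus the center (by the cases already handled, or by Lemma \ref{thm:f-N-k-k+1}/\ref{image of (12)} when $\ell=t-1$ or $\ell$ is not a superdiagonal index — one must be slightly careful and do the blocks in a suitable order so the needed facts about $f(\N^{\ell t})$ etc.\ are already known), leaving $0=f(A^{ij})_{k\ell}(A^{\ell t})_{\ell t}$, hence $f(A^{ij})_{k\ell}=0$ since $(A^{\ell t})_{\ell t}$ ranges over all of $N_{\ell t}$; or, symmetrically, if $k>1$ I use an element $A^{ki}\in\N^{ki}$ and read the $(k,j)$ block. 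This handles every $(k,\ell)$ except those on the $i$-th block row or $j$-th block column, which is exactly \eqref{eq for (ij)}.

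\textbf{The extra blocks in characteristic $2$.} When $(i,j)=(1,3)$, the above argument still kills $f(\N^{13})_{k\ell}$ for all relevant $(k,\ell)$ \emph{except} the one block $(2,t)$ (the analogue of the $(3,t)$-block exception for $\N^{12}$ in Lemma \ref{image of (12)}): there the escape-route argument breaks down because $k=2\ne 1$ has no available $A^{k,1}=A^{21}$ (nothing lies below the first block row) and $\ell=t$ has no available $A^{t,\ell}$. So \eqref{eq for (13)} is what survives, and one must additionally rule out $f(\N^{13})_{2t}$ precisely when characteristic is not $2$ — which is the content of the ``Furthermore'' in Lemma \ref{image of (12)} carried to the $(1,3)$ setting, and which genuinely fails in characteristic $2$. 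The mechanism is the same iterated-bracket trick used for $f(A^{12})_{3t}$: apply $f$ to $[A^{12},[A^{12},A^{23}]]$ or to $[A^{13},[A^{13},\,\cdot\,]]$-type expressions landing in $\N^{1t}$; the Leibniz expansion produces a term $2(\text{something})f(A^{13})_{2t}$, and the coefficient $2$ is what you can cancel only away from characteristic $2$. The statement \eqref{eq for (t-2,t)} and its exceptional $(1,t-1)$ block are the transpose-symmetric mirror, handled by the identical argument applied to the lower-right corner (or by the obvious anti-automorphism $X\mapsto$ (block transpose) that conjugates $\N$ to itself and intertwines the two situations).

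\textbf{Main obstacle.} The routine escape-route computations are straightforward; the real care is \emph{bookkeeping the order} in which the blocks $f(\N^{ij})$ are analyzed, because the cancellation of the ``second term'' $[A^{ij},f(A^{j\ell})]_{k t}$ relies on already knowing where $f(\N^{j\ell})$ can be supported, and that knowledge comes from Lemmas \ref{thm:f-N-k-k+1}, \ref{image of (12)} and from earlier instances of the present lemma (e.g.\ one should induct downward on $j-i$, or treat superdiagonal blocks via the already-proved Lemma \ref{thm:f-N-k-k+1} first). A second subtlety, specific to characteristic $2$, is verifying that the exceptional blocks $(2,t)$ and $(1,t-1)$ are the \emph{only} ones that fail to vanish — i.e.\ that no other iterated bracket forces them to zero in characteristic $2$ — so the containments \eqref{eq for (13)}–\eqref{eq for (t-2,t)} are not just upper bounds but the genuinely correct refinements; this amounts to later exhibiting explicit derivations (the maps $\psi$ of Lemmas \ref{lemma A12 and A13.} and \ref{lemma A(t-1,t) and A(t-2,t)}) that do hit those blocks, which is deferred to Section 4.
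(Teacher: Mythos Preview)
Your escape-route strategy has a genuine gap: it cannot reach the $(1,t)$ block of $f(\N^{ij})$ when $1<i$ and $j<t$ (so for any $t\ge 5$, e.g.\ $(i,j)=(2,4)$). For $(k,\ell)=(1,t)$ neither route is available --- you have $\ell=t$ and $k=1$ simultaneously --- and no bracket of the form $[f(A^{ij}),B]$ or $[B,f(A^{ij})]$ can ever detect the $(1,t)$ component of $f(A^{ij})$, because $\N^{1t}=\cen{\N}$ is central. Thus the statement ``since $j>i+1$, at least one of the two escape routes is available'' is simply false at $(1,t)$, and no amount of reordering the induction will fix this: the obstacle is structural, not bookkeeping.

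The paper's proof circumvents exactly this obstruction by going the other way: instead of bracketing $A^{ij}$ against an auxiliary element, it \emph{writes $A^{ij}$ itself as a bracket}. Since $j>i+1$, one has $\N^{ij}=[\N^{i,i+1},\N^{i+1,j}]$ (or $[\N^{i,i+2},\N^{i+2,j}]$ in the inductive step), so $f(A^{ij})=[f(A^{i,i+1}),A^{i+1,j}]+[A^{i,i+1},f(A^{i+1,j})]$, and now the $(1,t)$ block of the right-hand side can be read off directly from the already-known supports of $f(\N^{i,i+1})$ and $f(\N^{i+1,j})$. This is why the paper inducts \emph{upward} on $j-i$ (your ``downward'' is the wrong direction), with base case $j-i=2$ resting on Lemmas \ref{thm:f-N-k-k+1} and \ref{image of (12)}. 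The same bracket decomposition is what handles the characteristic-$2$ exceptions cleanly: when $i=1$ the term $[f(A^{12}),A^{23}]$ picks up $[\N^{3t},A^{23}]\subseteq\N^{2t}$ precisely because Lemma \ref{image of (12)} allows $f(\N^{12})_{3t}\ne 0$ in characteristic $2$; no separate iterated-bracket trick for $f(\N^{13})_{2t}$ is needed or used.

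Two smaller omissions: you only treat $(k,\ell)$ with $k\ne i$ and $\ell\ne j$, but the containment \eqref{eq for (ij)} also forbids the blocks $(i,\ell)$ with $\ell<j$ and $(k,j)$ with $k>i$; these do yield to direct bracket arguments (e.g.\ bracket with $A^{\ell j}$ and read the $(i,j)$ block), but they should be mentioned. And your route-2 auxiliary element should be $A^{1k}\in\N^{1k}$, not ``$A^{ki}$''.
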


\begin{proof} First assume  ${\rm char}(\F)\ne 2$.
Let $j=i+k$, $k\geq 2$. We prove \eqref{eq for (ij)} by induction on $k$.
\begin{enumerate}
\item $k=2:$   $\N^{i,i+2}= \N^{i,i+1}\N^{i+1,i+2}=[\N^{i,i+1}, \N^{i+1, i+2}]$. For $A^{i,i+1}\in \N^{i,i+1}$ and $A^{i+1, i+2}\in \N^{i+1, i+2}$,
    \begin{eqnarray}
    \notag
    f([{A^{i,i+1}}, {A^{i+1, i+2}}])
    &=&
    [f({A^{i,i+1}}), {A^{i+1, i+2}}]+[{A^{i,i+1}}, f({A^{i+1, i+2}})]
    \\
    \label{f-N-i-i+2}
    &\in &
     \N^{i, i+2}+ \sum_{p=1}^{i-1}\N^{p,i+2}+\sum_{q=i+3}^{t}\N^{i,q}
    \end{eqnarray}
    where the last relation is by Lemmas \ref{thm:f-N-k-k+1} and \ref{image of (12)}.
    Thus $k=2$ is done.

\item $k=\ell>2:$ Suppose \eqref{eq for (ij)} holds for all $k<\ell$ where $\ell>2$ is given. Now $\N^{i, i+\ell}= \N^{i, i+2}\N^{i+2, i+\ell}=[\N^{i, i+2}, \N^{i+2, i+\ell}]$.
For any $A^{i,i+2}\in\N^{i,i+2}$ and $A^{i+2, i+\ell}\in\N^{i+2,i+\ell}$,
    \begin{eqnarray}
    \notag
    f([A^{i,i+2}, A^{i+2, i+\ell}])&=&
    [f({A^{i,i+2}}), {A^{i+2, i+\ell}}]+[{A^{i,i+2}}, f({A^{i+2, i+\ell}})]
    \\
    \label{f-N-i-i+ell}
    & \in & \N^{i, i+\ell} + \sum_{p=1}^{i-1} \N^{p,i+\ell}
    +\sum_{q=i+\ell+1}^{t}\N^{i,q}
    \end{eqnarray}
    where the last relation is by induction hypothesis, the case $k=2$, and  Lemmas \ref{thm:f-N-k-k+1} and \ref{image of (12)}. So \eqref{eq for (ij)} is true for $k=\ell$.
\item Overall, \eqref{eq for (ij)} is true for all $k$.
\end{enumerate}

Now consider the case ${\rm char}(\F)=2$. The relation \eqref{f-N-i-i+2} has two exceptions $i=1$ and $i=t-2$ from Lemma \ref{image of (12)}.  For $i=1$, by Lemmas \ref{thm:f-N-k-k+1} and \ref{image of (12)},
\begin{equation*}
f([A^{12},A^{23}])=[A^{12}, f(A^{23})]+[f(A^{12}),A^{23}]\subseteq \sum_{q=3}^{t} \N^{1q}+[\N^{2t}+\N^{3t},A^{23}]\subseteq \sum_{q=3}^{t} \N^{1q}+\N^{2t}.
\end{equation*}
We get \eqref{eq for (13)}.  Similarly, we can get \eqref{eq for (t-2,t)}.
The relation \eqref{f-N-i-i+ell} is unaffected by \eqref{eq for (13)} and \eqref{eq for (t-2,t)} when $i=1$ or $(i,\ell)=(t-4, 4)$.
So the induction can be proceeded for  ${\rm char}(\F)=2$.
\end{proof}

Now the range of $f\in\Der(\N)$ on $\N^{ij}$ is limited.
The next lemma explicitly describes the $f$-images  of  each $\N^{ij}$ in almost all nonzero blocks.
It essentially implies that the $f$-images on these blocks are the same as the images of an adjoint action of a block upper triangular matrix.
Denote the index set
\begin{equation}\label{Omega}
\Omega:=\{(p,q)\in [t]\times [t] \mid p<q\}- \{(1,t-1), (1,t), (2,t)\}.
\end{equation}

\begin{lemma}\label{lemma X^{pq}}
% Suppose $\rm{char}({\F})\neq 2$.
Let $f\in \Der{(\N)}$. Then for any $(p,q)\in\Omega$,
there exists $X_{pq}\in N_{pq}$ such that
\begin{align}
f({A^{ip}})_{iq} &=-(A^{ip})_{ip}X_{pq}\quad \text{for all \ } {A^{ip}}\in {\N^{ip}}\subseteq \N,
\label{N-A_ip}
\\
f({A^{qj}})_{pj} &=X_{pq}(A^{qj})_{qj}\quad \text{for all \ } {A^{qj}}\in {\N^{qj}}\subseteq \N.
\label{N-A_qj}
\end{align}
\end{lemma}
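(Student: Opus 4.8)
The plan is to partition $\Omega$ into three types of pairs, according to which of \eqref{N-A_ip} and \eqref{N-A_qj} is non-vacuous, and in each type to extract a single defining relation from the Leibniz rule and feed it to one of Lemmas~\ref{thm:block-tran-1}--\ref{thm:block-tran-3}. When $t=3$ we have $\Omega=\emptyset$, so assume $t\ge 4$. Since $\N^{ip}\subseteq\N$ forces $i<p$ and $\N^{qj}\subseteq\N$ forces $q<j$, and since $(1,t-1),(1,t),(2,t)\notin\Omega$, every $(p,q)\in\Omega$ lies in exactly one of the cases: (A)~$p\ge 2$ and $q\le t-1$; (B)~$p=1$, forcing $q\le t-2$; (C)~$q=t$, forcing $p\ge 3$.

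\emph{Case (A).} Fix any $i_0<p$ and $j_0>q$. Because $i_0<p<q<j_0$ the blocks $(i_0,p)$ and $(q,j_0)$ do not chain, so $[A^{i_0p},A^{qj_0}]=0$ for all $A^{i_0p}\in\N^{i_0p}$ and $A^{qj_0}\in\N^{qj_0}$; applying $f$ and reading off the $(i_0,j_0)$ block, where only two cross terms survive, yields
\begin{equation*}
f(A^{i_0p})_{i_0q}\,(A^{qj_0})_{qj_0}=-(A^{i_0p})_{i_0p}\,f(A^{qj_0})_{pj_0}.
\end{equation*}
This is the hypothesis of Lemma~\ref{thm:block-tran-3} for the maps $B\mapsto f(B^{i_0p})_{i_0q}$ on $N_{i_0p}$ and $C\mapsto -f(C^{qj_0})_{pj_0}$ on $N_{qj_0}$; the matrix it produces in $N_{pq}$, with reversed sign, is $X_{pq}$, and it satisfies \eqref{N-A_ip} for $i=i_0$ and \eqref{N-A_qj} for $j=j_0$. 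To reach all admissible $i$ I re-run the displayed relation with the pair $(i,j_0)$, substitute \eqref{N-A_qj} (now known for $j_0$), and cancel the factor $(A^{qj_0})_{qj_0}$, which exhausts $N_{qj_0}$; symmetrically, using $(i_0,j)$ and cancelling $(A^{i_0p})_{i_0p}$ gives \eqref{N-A_qj} for all $j$.

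\emph{Case (B).} Now \eqref{N-A_ip} is vacuous. For $q<j_1<j_2\le t$ one has $[A^{qj_1},A^{j_1j_2}]=A^{qj_1}A^{j_1j_2}\in\N^{qj_2}$; applying $f$ and reading off the $(1,j_2)$ block, where $q>1$ and $j_1>1$ kill all but one term, gives $f(A^{qj_1}A^{j_1j_2})_{1,j_2}=f(A^{qj_1})_{1,j_1}\,(A^{j_1j_2})_{j_1,j_2}$. Thus the maps $g_j:N_{qj}\to N_{1j}$, $g_j(B):=f(B^{qj})_{1,j}$, obey $g_{j_2}(BC)=g_{j_1}(B)C$ for all $q<j_1<j_2\le t$, and $q\le t-2$ ensures at least two such indices. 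Freezing $j_1=q+1$, Lemma~\ref{thm:block-tran-2} produces for each $j_2$ a matrix in $N_{1q}$ representing both $g_{q+1}$ and $g_{j_2}$; since the matrix representing $g_{q+1}$ is unique, these all coincide into one $X_{1q}$ with $g_j(B)=X_{1q}B$ for every $j>q$, which is \eqref{N-A_qj}. Case (C) is the mirror image: \eqref{N-A_qj} is vacuous, and for $i_1<i_2<p$ the relation $[A^{i_1i_2},A^{i_2p}]=A^{i_1i_2}A^{i_2p}\in\N^{i_1p}$, read on the $(i_1,t)$ block, gives $h_{i_1}(BC)=B\,h_{i_2}(C)$ for $h_i:N_{ip}\to N_{it}$, $h_i(B):=f(B^{ip})_{i,t}$; with $p\ge 3$ there are at least two admissible $i$, Lemma~\ref{thm:block-tran-1} (freezing $i_1=1$) yields a common representing matrix, and its negative is $X_{pt}$ satisfying \eqref{N-A_ip}.

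The arguments are routine block-matrix bookkeeping; two points deserve care. First, in each case one must check that the two-summand Leibniz expansion collapses to a single cross term, i.e.\ that the spurious block products vanish — this is exactly where the inequalities defining cases (A)--(C) (themselves forced by the exclusions in $\Omega$) get used, and one must not overlook the superdiagonal members of $\Omega$ such as $(1,2)$ and $(t-1,t)$, which occur once $t\ge 4$. Second, one needs the uniqueness observation that glues the matrices obtained from different index pairs into a single $X_{pq}$. I expect the first point to be the main friction: it forces a check of a handful of near-degenerate index configurations, though none of it is deep.
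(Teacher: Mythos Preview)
Your argument is correct and uses the same raw ingredients as the paper (the Leibniz rule on commutators $[A^{ip},A^{qj}]=0$ and $[A^{qj},A^{jj'}]=A^{qj}A^{jj'}$, fed into Lemmas~\ref{thm:block-tran-1}--\ref{thm:block-tran-3}), but the organization is genuinely different. The paper does not partition $\Omega$; instead it first treats the two extremal pairs $(q,j)=(t-1,t)$ and $(i,p)=(1,2)$ via Lemma~\ref{thm:block-tran-3}, then for every $(p,q)$ derives \eqref{N-A_qj} and \eqref{N-A_ip} \emph{independently} via Lemmas~\ref{thm:block-tran-2} and \ref{thm:block-tran-1}, obtaining a priori distinct matrices $X_{pq}$ and $Y_{pq}$, and finally proves $X_{pq}=Y_{pq}$ in a separate step using the vanishing bracket $[A^{ip},A^{qj}]=0$. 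Your Case~(A) front-loads this last relation: you invoke it immediately with Lemma~\ref{thm:block-tran-3} to produce a single $X_{pq}$ at once, so no reconciliation step is needed; Cases~(B) and (C) then handle exactly those $(p,q)$ for which the paper's reconciliation is vacuous anyway. Your version is slightly more economical and is also more explicit than the paper about the uniqueness argument that glues the matrices coming from different $j$ (or $i$) into one $X_{pq}$---the paper leaves that implicit in its steps~(3) and~(4).
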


\begin{proof}
Given $p<q$ in $[t]$, we prove \eqref{N-A_ip} and \eqref{N-A_qj} by the following steps:
\begin{enumerate}
\item  We prove \eqref{N-A_qj} for $(q,j)=(t-1, t)$.  Then $1<p<t-1$. For  $A^{t-1, t}\in \N^{t-1, t}$ and $A^{1p}\in \N^{1p}$,
\begin{eqnarray*}
0&=&f([{A^{1p}}, {{A^{t-1, t}}}])_{1t}=[f({A^{1p}}), A^{t-1, t}]_{1t}+ [A^{1p}, f({A^{t-1,t}})]_{1t}
\\
 &=& f({A^{1p}})_{1,t-1}(A^{t-1,t})_{t-1,t}+(A^{1p})_{1p}f({A^{t-1,t}})_{pt}.
\end{eqnarray*}
Therefore, $$-f({A^{1p}})_{1,t-1}(A^{t-1,t})_{t-1,t}=(A^{1p})_{1p}f({A^{t-1,t}})_{pt}.$$
Applying Lemma \ref{thm:block-tran-3} to $\phi:N_{1p}\to N_{1,t-1}$ defined
by $\phi(C)=-f(C^{1p})_{1,t-1}$ and
$\varphi:N_{t-1,t}\to N_{pt}$ defined by $\varphi( D)=f(D^{t-1,t})_{pt}$,
we will find $X_{p,t-1}\in N_{p,t-1}$ such that
$f({A^{t-1,t}})_{pt}=X_{p,t-1}(A^{t-1,t})_{t-1,t}$ for all $A^{t-1,t}\in \N^{t-1,t}$.

\item Similarly, we can prove \eqref{N-A_ip} for $(i,p)=(1,2)$ via Lemma \ref{thm:block-tran-3}. In other words,
for $2<q<t$, there is $-Y_{2q}\in N_{2q}$
    such that $f({A^{12}})_{1q}=-(A^{12})_{12}Y_{2q}$ for all $A^{12}\in \N^{12}$.

\item
Now we prove \eqref{N-A_qj} for $(q, j)\neq (t-1,t)$.  Then $q<t-1$.
 Given any $j'>j$ in $[t]$, we have
  ${\N^{qj'}}={\N^{qj}}{\N^{jj'}}=[{\N^{qj}}, {\N^{jj'}}]$.
 Then for $A^{qj}\in\N^{qj}$ and $A^{jj'}\in\N^{jj'}$,
$$
f({A^{qj} A^{jj'}})_{pj'}
=f([{A^{qj}}, {A^{jj'}}])_{pj'}=[f({A^{qj}}), {A^{jj'}}]_{pj'}+[{A^{qj}}, f({A^{jj'}})]_{pj'}
=f({A^{qj}})_{pj}(A^{jj'})_{jj'}.
$$
Applying Lemma \ref{thm:block-tran-2} to $\phi:N_{qj'}\to N_{pj'}$ defined by $\phi(C)=f(C^{qj'})_{pj'}$ and
$\varphi:N_{qj}\to N_{pj}$ defined by $\varphi(D)=f({D}^{qj})_{pj}$, we will find $X_{pq}\in N_{pq}$ such that
$f({A^{qj}})_{pj}=X_{pq}(A^{qj})_{qj}$ for all $A^{qj}\in \N^{qj}$ and $(q,j)\neq (t-1,t)$.

\item
Similarly, we can prove \eqref{N-A_ip} for $(i,p)\ne (1,2)$ via  Lemma \ref{thm:block-tran-1}. In other words,
there exists $-Y_{pq}\in N_{pq}$ such that
$f({A^{ip}})_{iq}=-(A^{ip})_{ip} Y_{pq}$ for all $A^{ip}\in \N^{ip}$ and $(i,p)\neq (1,2)$.

\item
Finally, for any $ A^{ip}\in \N^{ip}, A^{qj}\in \N^{qj}$, we have $i<p<q<j$, $[{A^{ip}}, {A^{qj}}]=0$, so that
\begin{eqnarray*}
0&=&f([{A^{ip}}, {A^{qj}}])_{ij}
=[f({A^{ip}}), {A^{qj}}]_{ij}+[{A^{ip}}, f({A^{qj}})]_{ij}
\\
&=& f({A^{ip}})_{iq}(A^{qj})_{qj}+(A^{ip})_{ip}f({A^{qj}})_{pj}
\\
&=&-(A^{ip})_{ip}Y_{pq}(A^{qj})_{qj}+(A^{ip})_{ip}X_{pq}(A^{qj})_{qj}.
\end{eqnarray*}
Therefore, $X_{pq}=Y_{pq}$. The equalities \eqref{N-A_ip} and \eqref{N-A_qj} are proved. \qedhere
\end{enumerate}
\end{proof}

The next lemma concerns the derivations with image in the center of $\N$.

\begin{lemma}\label{map to center}
Suppose $f\in \End{\mathcal{(N)}}$ satisfies that
$$
f(\N)\subseteq \cen{\N}=\N^{1t},\qquad
\Ker{f}\supseteq [\N,\N]=\sum_{i,j\in[t], i+1<j} \N^{ij}.
$$
Then $f\in \Der(\N)$.
\end{lemma}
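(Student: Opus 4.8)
The plan is to verify the Leibniz identity $f([X,Y])=[f(X),Y]+[X,f(Y)]$ directly for all $X,Y\in\N$, by showing that each side vanishes identically; in fact the statement is an instance of a general principle for Lie algebras, namely that any $\F$-linear map $g$ on a Lie algebra $\g$ with $\Ker g\supseteq[\g,\g]$ and $\Im g\subseteq\cen\g$ is a derivation, a fact already used implicitly in the Introduction. The argument requires only bookkeeping of block indices.

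First I would record that the center $\cen{\N}=\N^{1t}$ is annihilated by the Lie bracket with every element of $\N$: for a block $\N^{ij}\subseteq\N$ with $i<j\le t$, the associative product $\N^{1t}\N^{ij}$ can be nonzero only if $i=t$, which is impossible, while $\N^{ij}\N^{1t}$ can be nonzero only if $j=1$, impossible since $j>i\ge 1$. Hence $[\cen{\N},\N]=0$. Since $f(X),f(Y)\in f(\N)\subseteq\cen{\N}$ by hypothesis, it follows that $[f(X),Y]=[X,f(Y)]=0$, so the right-hand side of the Leibniz identity is $0$.

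Next I would observe that $[X,Y]$ lies in the derived algebra $[\N,\N]=\sum_{i+1<j}\N^{ij}$, which by the second hypothesis is contained in $\Ker f$; hence $f([X,Y])=0$ and the left-hand side vanishes as well. Together with the $\F$-linearity built into $f\in\End(\N)$, this gives $f\in\Der(\N)$.

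I do not expect any real obstacle. The only two points worth double-checking are that the center $\N^{1t}$ is a two-sided annihilator inside the associative algebra spanned by the blocks of $\N$ — which yields the vanishing of the relevant Lie brackets — and that the description $[\N,\N]=\sum_{i+1<j}\N^{ij}$ matches the kernel hypothesis; both follow at once from the defining block partition of $M_n$.
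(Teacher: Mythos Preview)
Your proof is correct and follows essentially the same approach as the paper's own proof: both sides of the Leibniz identity vanish because $f([\N,\N])=0$ and $[f(\N),\N]=[\N,f(\N)]=0$. The paper condenses this into a single displayed line, whereas you spell out the block-index verification that $\cen{\N}=\N^{1t}$ brackets to zero with $\N$, but the argument is the same.
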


\begin{proof}
The $f$ satisfying the above conditions also satisfies the derivation property:
$$f([\mathcal{N}, \mathcal{N}])=0=[f (\mathcal{N}), \mathcal{N}]+[\mathcal{N}, f(\mathcal{N})].  \qquad\qquad \qedhere$$
\end{proof}

Now we are ready to prove Theorem \ref{thm:derivation not 2}.

\begin{proof}[Proof of Theorem \ref{thm:derivation not 2}]
By Lemma \ref{lemma X^{pq}},  for  $(p,q)\in\Omega$  we can find a matrix $X_{pq}\in N_{pq}$ that satisfies \eqref{N-A_ip} and \eqref{N-A_qj}.
Let $X^{pq}:=(X_{pq})^{pq}\in\N^{pq}$, and let
\begin{equation}
X_0:=\sum_{(p,q)\in\Omega} X^{pq}\ \in\ \N,\qquad f_0:=f-\ad X_0\ \in\ \Der(\N).
\end{equation}
The equalities \eqref{N-A_ip} and \eqref{N-A_qj} imply that
\begin{equation}\label{f_0 property}
f_0(\N^{ip})_{iq} =0\quad \text{for all \ }  \N^{ip}\subseteq \N,
\qquad
f_0(\N^{qj})_{pj} =0\quad \text{for all \ } \N^{qj}\subseteq \N.
\end{equation}
By \eqref{eq for (k,k+1)},  \eqref{eq for (ij)}, and Lemma \ref{image of (12)} for ${\rm char}(\F)\ne 2$, for any $\N^{ij}\subseteq\N$, the only possibly nonzero blocks of $f_0(\N^{ij})$ are
the $(i,j)$ block and  the following:
\begin{enumerate}
\item the $(1,t)$ block when $j=i+1$, and
\item the $(2,t)$ block when $(i,j)=(1,2)$, or
\item the $(1,t-1)$ block when $(i,j)=(t-1,t)$.
\end{enumerate}
Define ${\phi}^{12}_{2t},{\phi}^{t-1,t}_{1,t-1}\in\End(\N)$ such that for $A\in \N$,
\begin{eqnarray}
{\phi}^{12}_{2t}(A) &:=& f_0(A^{12})^{2t}=f(A^{12})^{2t},\\
{\phi}^{t-1,t}_{1,t-1}(A) &:=& f_0(A^{t-1,t})^{1,t-1}=f(A^{t-1,t})^{1,t-1}.
\end{eqnarray}
Then Lemmas \ref{lemma A12.} and \ref{lemma A_t-1,t.} show that ${\phi}^{12}_{2t},{\phi}^{t-1,t}_{1,t-1}\in\Der(\N)$.
We get a  derivation
\begin{eqnarray}\label{def of f1}
f_1:=f_0- {\phi}^{12}_{2t} - {\phi}^{t-1,t}_{1,t-1}= f- \ad {X_0}- {\phi}^{12}_{2t} - {\phi}^{t-1,t}_{1,t-1}.
\end{eqnarray}
 Define a linear map $\varphi_{1t}\in\End(\N)$ such that for $A\in \N$,
\begin{equation}
\varphi_{1t}(A):=\sum_{i=1}^{t-1} f_1(A^{i,i+1})^{1t}=\sum_{i=1}^{t-1} f (A^{i,i+1})^{1t}.
\end{equation}
Then Lemma \ref{map to center} implies that $\varphi_{1t}\in\Der(\N)$.
We get a new derivation
\begin{eqnarray}\label{def of f2}
f_2:=f_1-\varphi_{1t}= f- \ad {X_0}- {\phi}^{12}_{2t} - {\phi}^{t-1,t}_{1,t-1}-\varphi_{1t}
\end{eqnarray}
where $f_2(\N^{ij})\subseteq\N^{ij}$.

To get \eqref{main result}, it suffices to prove the following claim regarding $f_2$: there exist $X^{ii}\in \N^{ii}$ for $i\in [t]$ such that for each $k\in [t-1]$, the derivation $$f_2^{(k)} := f_2-\sum_{i=1}^{k+1} \ad X^{ii}$$
satisfies that $f_{2}^{(k)}(\N^{pq})=0$ for $1\leq p<q\leq k+1$. The proof is done by induction on $k$:
\begin{enumerate}
\item $k=1$: For any $A^{12}\in \N^{12}$ and $A^{23}\in \N^{23}$, we have
\begin{eqnarray}
f_2(A^{12}A^{23})_{13}&=&f_2([A^{12}, A^{23}])_{13}
=[f_2(A^{12}), A^{23}]_{13}+[A^{12}, f_2(A^{23})]_{13}
\notag \\
&=& f_2(A^{12})_{12}(A^{23})_{23}+ (A^{12})_{12}f_2(A^{23})_{23}.\label{A^12A^23}
\end{eqnarray}
By \eqref{block-tran-eq4.1'} in Lemma \ref{thm:block-tran-4}, there exist $X^{11}\in \N^{11}$ and $-X^{22}\in \N^{22}$  such that
$$
f_2(A^{12})_{12}= (X^{11}A^{12} - A^{12}X^{22})_{12}.
$$
Let $f_2^{(1)}:= f_2- \ad  X^{11} - \ad X^{22}$. Then $f_2^{(1)}(\N^{12}) =0$. The claim holds for $k=1$.

\item $k=2$: Applying \eqref{A^12A^23} to $f_2^{(1)}$:
$$f_2^{(1)}(A^{12}A^{23})_{13}= f_2^{(1)}(A^{12})_{12}(A^{23})_{23}+(A^{12})_{12}f_2^{(1)}(A^{23})_{23}=(A^{12})_{12}f_2^{(1)}(A^{23})_{23}.$$
By Lemma \ref{thm:block-tran-1}, there exists $-X^{33}\in \N^{33}$  such that $f_2^{(1)}(A^{13})_{13}= (-A^{13}X^{33})_{13}$ and $f_2^{(1)}(A^{23})_{23}=(-A^{23}X^{33})_{23}$. Let $f_2^{(2)}:=f_2^{(1)}-\ad X^{33}$. Then
$f_2^{(2)}(\N^{pq})=0$ for $1\le p<q\le 3$. So $k=2$ is done.

\item $k=\ell>2$: Suppose the claim holds for all $k<\ell$ where $\ell>2$ is given. In other words, there exist $X^{ii}\in \N^{ii}$ for all $i\in [\ell]$  such that
$f_2^{(\ell-1)}:= f_2-\sum_{i=1}^{\ell}\ad X^{ii}$ satisfies that $f_2^{(\ell-1)}(\N^{pq})=0$ for $1\leq p < q\leq \ell$. Similar to \eqref{A^12A^23}, for any $p\in [\ell-1]$,
$A^{p,\ell}\in \N^{p,\ell}$, $A^{\ell, \ell+1}\in \N^{\ell, \ell+1}$,
\begin{eqnarray*}
f_2^{(\ell-1)}(A^{p,\ell}A^{\ell, \ell+1})_{p,\ell+1}
%&=&[f_2^{(\ell-1)}(A^{p,\ell-1}), A^{\ell-1, \ell}]_{p,\ell}+[A^{p,\ell-1}, f_2^{(\ell-1)}(A^{\ell-1, \ell})]_{p,\ell}
&=& f_2^{(\ell-1)}(A^{p,\ell})_{p,\ell}(A^{\ell, \ell+1})_{\ell, \ell+1}+ (A^{p,\ell})_{p,\ell}f_2^{(\ell-1)}(A^{\ell, \ell+1})_{\ell, \ell+1}
\\
&=& (A^{p,\ell})_{p,\ell}f_2^{(\ell-1)}(A^{\ell, \ell+1})_{\ell, \ell+1}.
\end{eqnarray*}
By Lemma \ref{thm:block-tran-1}, there exists $-X^{\ell+1, \ell+1}\in \N^{\ell+1, \ell+1}$ such that
$$f_2^{(\ell-1)}(A^{p,\ell+1})_{p,\ell+1}=(-A^{p,\ell+1}X^{\ell+1, \ell+1})_{p,\ell+1} \qquad \text{for all \ } p\in[\ell].$$
Let $f_2^{(\ell)}:= f_2^{(\ell-1)}- \ad X^{\ell+1, \ell+1}$. Then $f_2^{(\ell)}(\N^{p, \ell+1})=0$ for $p\in[\ell]$.  So $k=\ell$ is proved.
 \end{enumerate}

 Overall,  the claim is completely proved; in particular, $f_2^{(t-1)}(\N)=0$.  Let $X:=X_0+\sum_{i=1}^{t} X^{ii}$, then we get
  \eqref{main result}.
\end{proof}

\section{Derivations of $\N$ for ${\rm char}(\F)= 2$}

When $\rm{char}(\F)=2$, $\Der(\N)$ is not completely described by Theorem \ref{thm:derivation not 2}.
Lemmas \ref{image of (12)}, \ref{image of (i,j) block}, and \cite[Section 2(D)]{OWY} motivate  us to construct the following example.

\begin{ex}\label{example}
Suppose $\rm{char}(\F)=2$. Let $\N$ consist of strictly upper triangular matrices in $M_{4}$.
So $\N$ has a basis ${\mathcal B} := \{E_{12}, E_{13}, E_{14},E_{23},E_{24}, E_{34}\}$, where $E_{ij}$
denotes the matrix in $M_{4}$ that has the only nonzero entry 1 in the $(i,j)$ position.
Define $f\in \End (\N)$ by $f(E_{12}) := -E_{34}$, $f(E_{13}) := E_{24}$,
and $f(E) := 0$ for all other matrices $E\in\mathcal B$. We prove that
\begin{eqnarray}\label{ex eq}
f([E, E']) = [f(E), E']+ [E, f(E')]
\end{eqnarray}
for any $E, E'\in \mathcal B$, so that $f\in \Der(\N)$.
There are  only two cases where one side of \eqref{ex eq} is possibly nonzero:
\begin{enumerate}
\item $\{E, E'\} = \{E_{12}, E_{23}\}$, where
$f([E, E']) = f(E_{13}) = E_{24}$, $[f(E), E']+ [E, f(E')] = E_{24}.$

\item $\{E, E'\} = \{E_{12}, E_{13}\}$, where
$f([E, E']) = 0$, $[f(E), E']+ [E, f(E')] = 2E_{14}=0.$
\end{enumerate}
Therefore,  $f \in \Der(\N)$.
However, $f$ can not be written as in \eqref{main result}.
\end{ex}

In Section 3,  Lemmas \ref{image of (12)} and \ref{image of (i,j) block} have special statements for ${\rm char}(\F)= 2$, while Lemmas \ref{thm:f-N-k-k+1}, \ref{lemma A12.}, \ref{lemma A_t-1,t.}, \ref{lemma X^{pq}}, \ref{map to center} remain  unchanged.
The following two lemmas completely describe the  images of a derivation on additional nonzero blocks when ${\rm char}(\F)=2$.

\begin{lemma}\label{lemma A12 and A13.}
When ${\rm char}(\F)= 2$, suppose  the $(1,2)$ block and the $(1,3)$ block of $\N$ has the size $m\times p$ and $m\times q$ respectively.
For $f\in \Der(\N)$, the images $f(\N^{12})_{3t}$ and $f(\N^{13})_{2t}$ satisfy that
\begin{enumerate}
\item If $m\geq 2$, then
\begin{eqnarray}\label{12 and 13. block of f(A) zero}
f(\N^{12})_{3t}=0,\qquad  f(\N^{13})_{2t} = 0.
\end{eqnarray}
\item If $m=1$, then
$\N^{12}$ has a basis $\{E^{12}_{1i}\mid i\in [p]\}$ and $\N^{13}$ has a basis $\{E^{13}_{1j}\mid j\in [q]\}$;
the $i$-th row of $f(E^{13}_{1j})_{2t}$ is equal  to  the $j$-th row of $f(E^{12}_{1i})_{3t}$
for any $i \in [p]$ and $j \in [q]$.
\end{enumerate}
Conversely, any $f\in\End(\N)$ that satisfies $f(\N^{ij})=0$ for $\N^{ij}\subseteq\N$ and $(i,j)\not\in\{(1,2),(1,3)\}$, $f(\N^{12})\subseteq\N^{3t}$,
$f(\N^{13})\subseteq\N^{2t}$, and the above hypothesis, is in $\Der(\N)$.
\end{lemma}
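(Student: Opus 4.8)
The plan is to prove the two directions separately: the forward direction reduces to one vanishing-bracket computation, and the converse is a short case check that hinges on ${\rm char}(\F)=2$. Throughout I assume $t\ge 4$, the cases $t\le 3$ being trivial — for $t=3$, $f(\N^{12})_{3t}$ lies in a diagonal block and $f(\N^{13})_{2t}$ is the image of the centre $\N^{13}=\N^{1t}$, so both vanish automatically.

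\textbf{Forward direction.} The crux is that $E^{12}_{ri}$ and $E^{13}_{sj}$ are both supported in the first block row, so $E^{12}_{ri}E^{13}_{sj}=0=E^{13}_{sj}E^{12}_{ri}$ and hence $[E^{12}_{ri},E^{13}_{sj}]=0$ for all $r,s\in[m]$, $i\in[p]$, $j\in[q]$. Applying $f$ and reading off the $(1,t)$ block gives
$$0=[f(E^{12}_{ri}),E^{13}_{sj}]_{1t}+[E^{12}_{ri},f(E^{13}_{sj})]_{1t}.$$
Since $E^{13}_{sj}$ is concentrated in the $(1,3)$ block, only the $(3,t)$ block of $f(E^{12}_{ri})$ survives in the first term, producing $-(E^{13}_{sj})_{13}\,f(E^{12}_{ri})_{3t}$; likewise $E^{12}_{ri}$ is concentrated in the $(1,2)$ block, so only the $(2,t)$ block of $f(E^{13}_{sj})$ survives in the second term, producing $(E^{12}_{ri})_{12}\,f(E^{13}_{sj})_{2t}$. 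Hence
$$(E^{13}_{sj})_{13}\,f(E^{12}_{ri})_{3t}=(E^{12}_{ri})_{12}\,f(E^{13}_{sj})_{2t},$$
an equality of matrices with $m$ rows whose left side has at most its $s$-th row nonzero — that row being the $j$-th row of $f(E^{12}_{ri})_{3t}$ — and whose right side has at most its $r$-th row nonzero — that row being the $i$-th row of $f(E^{13}_{sj})_{2t}$. If $m\ge2$, taking $s\ne r$ kills the left side and forces the $j$-th row of $f(E^{12}_{ri})_{3t}$ to vanish, and taking $r\ne s$ forces the $i$-th row of $f(E^{13}_{sj})_{2t}$ to vanish; letting $r,s,i,j$ vary yields (1). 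If $m=1$, then $r=s=1$ and the identity says exactly that the $j$-th row of $f(E^{12}_{1i})_{3t}$ equals the $i$-th row of $f(E^{13}_{1j})_{2t}$, which is (2); the basis claims there are immediate since $\N^{12}=N_{12}$ and $\N^{13}=N_{13}$ are full blocks of row size $1$.

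\textbf{Converse.} One checks $f([X,Y])=[f(X),Y]+[X,f(Y)]$ with $X,Y$ running over the block subspaces $\N^{ab}\subseteq\N$. If $m\ge2$, the hypotheses leave $f(\N^{12})=f(\N^{13})=0$, so $f\equiv0$, which is a derivation. If $m=1$: by antisymmetry of the bracket we may take $X\in\N^{12}$ or $X\in\N^{13}$, and then running through the possible $Y$ shows that almost all pairs give $0=0$. The exceptions are: (a) one of $X,Y$ in $\N^{12}$ and the other in $\N^{13}$, where the identity collapses exactly to the relation of part (2) and is thus guaranteed; and (b) $X=E^{12}_{1i}$, $Y=E^{23}_{kl}$, where $[E^{12}_{1i},E^{23}_{kl}]=\delta_{ik}E^{13}_{1l}$, $f(E^{23}_{kl})=0$, and the identity reduces in the $(2,t)$ block to
$$\delta_{ik}\,f(E^{13}_{1l})_{2t}=-(E^{23}_{kl})_{23}\,f(E^{12}_{1i})_{3t}.$$
As ${\rm char}(\F)=2$ the sign is immaterial, and the row-matching relation of part (2) is precisely what makes the two sides agree. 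The pairs involving $\N^{2q}$ or $\N^{3q}$ with $q\ge4$, as well as all remaining pairs, yield $0=0$, again using ${\rm char}(\F)=2$ to annihilate the ``doubled'' contributions (the phenomenon isolated in Example \ref{example}).

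\textbf{Where the work is.} The forward direction is a single bracket computation once the pair $(E^{12}_{ri},E^{13}_{sj})$ is identified. The substance lies in the converse: one must carefully enumerate which block-generator pairs can produce a non-trivial identity, and the verification genuinely relies on ${\rm char}(\F)=2$, since every would-be obstruction is of the form $2(\cdots)$. The subtlest pair is the interaction of $\N^{12}$ with $\N^{23}$: in contrast with Lemma \ref{lemma A12.}, where $f(\N^{12})\subseteq\N^{2t}$ is unaffected by $\ad\N^{23}$, here $f(\N^{12})\subseteq\N^{3t}$ is pushed by $\ad\N^{23}$ into $\N^{2t}$, and it is exactly the row-matching relation, together with characteristic $2$, that keeps this consistent.
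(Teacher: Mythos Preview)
Your forward direction is correct and essentially identical to the paper's: both read off the $(1,t)$ block of $f([E^{12},E^{13}])=0$ to obtain the key identity $(E^{12}_{ri})_{12}f(E^{13}_{sj})_{2t}=(E^{13}_{sj})_{13}f(E^{12}_{ri})_{3t}$, and then compare rows.

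Your converse, however, has a genuine gap in case (b). The displayed identity
\[
\delta_{ik}\,f(E^{13}_{1l})_{2t} \;=\; (E^{23}_{kl})_{23}\,f(E^{12}_{1i})_{3t}
\]
(with the sign absorbed by ${\rm char}(\F)=2$) does \emph{not} follow from the row-matching relation alone when $p\ge 2$. Take $i\ne k$: the left side is $0$, while the right side is the matrix whose only possibly nonzero row is the $k$-th, equal to the $l$-th row of $f(E^{12}_{1i})_{3t}$. The row-matching relation rewrites this as the $i$-th row of $f(E^{13}_{1l})_{2t}$, which nothing in hypothesis~(2) forces to vanish. Concretely, with $t=4$, block sizes $1,2,1,1$, define $f(E^{12}_{12})=E^{34}_{11}$, $f(E^{13}_{11})=E^{24}_{21}$, and $f=0$ elsewhere; this satisfies~(2), yet $f([E^{12}_{11},E^{23}_{11}])=f(E^{13}_{11})=E^{24}_{21}$ while $[f(E^{12}_{11}),E^{23}_{11}]+[E^{12}_{11},f(E^{23}_{11})]=0$. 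The same obstruction already appears when $i=k$: rows $i'\ne i$ of the left side need not vanish.

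In fact this shows the converse as stated in the lemma is false when $m=1$ and $p\ge 2$; the missing constraint is precisely the one coming from $[\N^{12},\N^{23}]$, and it forces $f(\N^{12})_{3t}=f(\N^{13})_{2t}=0$ in that case. The paper's terse proof glosses over this as well. None of this endangers Theorem~\ref{thm:derivation char 2}: there $\psi^{12;13}_{3t;2t}$ is extracted from an actual derivation $f_0$, so the extra constraint is automatically met. But your sentence ``the row-matching relation of part~(2) is precisely what makes the two sides agree'' is not correct, and the converse needs the additional hypothesis $p=1$ (equivalently, block~$2$ has a single column) to hold as written.
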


\begin{proof} Suppose $f\in\Der(\N)$.
Given $E^{12}_{ij}\in\N^{12}$ and $E^{13}_{rs}\in \N^{13}$,
\begin{equation*}
0=f([E^{12}_{ij}, E^{13}_{rs}])_{1t}=[E^{12}_{ij}, f(E^{13}_{rs})]_{1t}+[f(E^{12}_{ij}), E^{13}_{rs}]_{1t}
            =(E^{12}_{ij})_{12}f(E^{13}_{rs})_{2t} - (E^{13}_{rs})_{13}f(E^{12}_{ij})_{3t}.
\end{equation*}
Therefore,
\begin{equation}\label{char-2-12-13-blocks}
(E^{12}_{ij})_{12}f(E^{13}_{rs})_{2t}=(E^{13}_{rs})_{13}f(E^{12}_{ij})_{3t}.
\end{equation}
\begin{enumerate}
\item If $m\ge 2$, then for a fixed $E^{12}_{ij}\in\N^{12}$, we can choose $r\in [m]-\{i\}$.
Comparing the $r$-th rows in the equality \eqref{char-2-12-13-blocks}, we see that
the $s$-th row of $f(E^{12}_{ij})_{3t}$ is zero.  Then $f(E^{12}_{ij})_{3t}=0$ since $s$ is arbitrary.
Similarly, $f(E^{13}_{rs})_{2t}=0$.  We get \eqref{12 and 13. block of f(A) zero}.

\item If $m=1$, then $i=r=1$ in \eqref{char-2-12-13-blocks},
which implies that the $j$-th row of $f(E^{13}_{1s})_{2t}$ is equal to the $s$-th row of $f(E^{12}_{1j})_{3t}$.
\end{enumerate}

Conversely, suppose $f\in\End(\N)$  satisfies that $f(\N^{ij})=0$ for $\N^{ij}\subseteq\N$ and $(i,j)\not\in\{(1,2),(1,3)\}$, $f(\N^{12})\subseteq\N^{3t}$,
$f(\N^{13})\subseteq\N^{2t}$, and the hypothesis in Lemma \ref{lemma A12 and A13.} (1) or (2).  When $m\ge 2$,
$f\equiv 0$; when $m=1$,  $f$ satisfies \eqref{char-2-12-13-blocks} for $i=r=1$ and all  $j\in[p]$, $s\in[q]$.
In both cases, $f$ satisfies the derivation property and thus $f\in\Der(\N)$.
\end{proof}

\begin{lemma}\label{lemma A(t-1,t) and A(t-2,t)}
When ${\rm char}(\F)= 2$, suppose  the $(t-1,t)$ block and the $(t-2,t)$ block of $\N$ has the size $p\times m$ and $q\times m$ respectively.
For $f\in \Der(\N)$, the images $f(\N^{t-1,t})_{1,t-2}$ and $f(\N^{t-2,t})_{1,t-1}$ satisfy that
\begin{enumerate}
\item If $m\geq 2$, then
\begin{eqnarray}\label{(t-1,t) and (t-2,t). block of f(A) zero}
f(\N^{t-1,t})_{1,t-2}=0,\qquad  f(\N^{t-2,t})_{1,t-1} = 0.
\end{eqnarray}
\item If $m=1$, then
$\N^{t-1,t}$ has a basis $\{E^{t-1,t}_{i1}\mid i\in [p]\}$ and $\N^{t-2,t}$ has a basis $\{E^{t-2,t}_{j1}\mid j\in [q]\}$;
the $i$-th column of $f(E^{t-2,t}_{j1})_{1,t-1}$ is equal  to  the $j$-th column of $f(E^{t-1,t}_{i1})_{1,t-2}$
for any $i \in [p]$ and $j \in [q]$.
\end{enumerate}
Conversely, any $f\in\End(\N)$ that satisfies $f(\N^{ij})=0$ for $\N^{ij}\subseteq\N$ and $(i,j)\not\in\{(t-1,t),(t-2,t)\}$, $f(\N^{t-1,t})\subseteq\N^{1,t-2}$,
$f(\N^{t-2,t})\subseteq\N^{1,t-1}$, and the above hypothesis, is in $\Der(\N)$.
\end{lemma}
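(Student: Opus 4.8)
The plan is to mirror the proof of Lemma \ref{lemma A12 and A13.} verbatim, exploiting the transpose-type symmetry between the ``first block row'' situation and the ``last block column'' situation. Concretely, the computation starts from the Jacobi/derivation identity applied to the bracket $[E^{t-1,t}_{i1},E^{t-2,t}_{j1}]$, which is zero because both factors lie in the last block column (so their product and the reverse product vanish). Writing out $0=f([E^{t-1,t}_{i1},E^{t-2,t}_{j1}])_{1t}=[f(E^{t-1,t}_{i1}),E^{t-2,t}_{j1}]_{1t}+[E^{t-1,t}_{i1},f(E^{t-2,t}_{j1})]_{1t}$, and using the range restrictions $f(\N^{t-1,t})_{1,t-2}\subseteq N_{1,t-2}$ and $f(\N^{t-2,t})_{1,t-1}\subseteq N_{1,t-1}$ coming from Lemmas \ref{image of (12)} and \ref{image of (i,j) block}, the only surviving terms give the key identity
\begin{equation*}
f(E^{t-1,t}_{i1})_{1,t-2}\,(E^{t-2,t}_{j1})_{t-2,t}=(E^{t-1,t}_{i1})_{t-1,t}\,f(E^{t-2,t}_{j1})_{1,t-1},
\end{equation*}
which is the column-wise transpose of \eqref{char-2-12-13-blocks}. (One should double-check the sign in char $2$, where $\pm$ is irrelevant anyway.)

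From this identity I would argue exactly as before. If $m\ge 2$, fix $E^{t-1,t}_{i1}$ and choose a column index $r\in[m]-\{1\}$; wait---here $(E^{t-2,t}_{j1})_{t-2,t}=E^{(qm)}_{j1}$ has its nonzero entry in column $1$, so the right analogue is: since $m\ge 2$, the matrices $(E^{t-1,t}_{i1})_{t-1,t}$ and $(E^{t-2,t}_{j1})_{t-2,t}$ occupy the single column $1$ of an $m$-column block, and comparing the appropriate columns on the two sides of the identity forces $f(E^{t-1,t}_{i1})_{1,t-2}=0$ and $f(E^{t-2,t}_{j1})_{1,t-1}=0$, giving \eqref{(t-1,t) and (t-2,t). block of f(A) zero}. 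If $m=1$, then the block index on the far right is forced to be $1$, the displayed identity reduces to an equality of a column of $f(E^{t-1,t}_{i1})_{1,t-2}$ with a column of $f(E^{t-2,t}_{j1})_{1,t-1}$, which is precisely statement (2). The bases $\{E^{t-1,t}_{i1}\}$ and $\{E^{t-2,t}_{j1}\}$ are immediate since those blocks have a single column when $m=1$.

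For the converse, suppose $f\in\End(\N)$ kills every $\N^{ij}$ except possibly $\N^{t-1,t}$ and $\N^{t-2,t}$, maps $\N^{t-1,t}$ into $\N^{1,t-2}$ and $\N^{t-2,t}$ into $\N^{1,t-1}$, and satisfies the stated row/column hypothesis. When $m\ge 2$ this forces $f\equiv 0$, which is trivially a derivation. When $m=1$, I would verify the derivation identity $f([E,E'])=[f(E),E']+[E,f(E')]$ on all pairs of standard basis matrices $E,E'$ of $\N$: both sides vanish unless the bracket or one of the images lands in $\cen{\N}=\N^{1t}$, and the only nonzero possibilities involve $\{E^{t-1,t}_{i1},E^{t-2,t}_{j1}\}$ and $\{E^{t-1,t}_{i1},E^{t-2,t-1}_{\cdot}\}$-type pairs; the displayed identity (for $m=1$) is exactly what is needed to make these match, and in char $2$ any terms of the form $2(\cdot)$ vanish. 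The main obstacle is purely bookkeeping: getting the row-versus-column indexing and the placement of the lone index right, so that one genuinely has the transpose of Lemma \ref{lemma A12 and A13.} rather than an off-by-one variant; since all the structural input (Lemmas \ref{image of (12)}, \ref{image of (i,j) block}) is already available, no new idea is required, and I would simply write ``The proof is similar to that of Lemma \ref{lemma A12 and A13.}'' after recording the key identity above.
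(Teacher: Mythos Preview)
Your approach is exactly the paper's: it simply records ``The proof (omitted) is similar to that of Lemma \ref{lemma A12 and A13.}.'' So strategically you are done.

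That said, since you explicitly worry about bookkeeping, two concrete slips in your write-up are worth fixing before you commit it. First, your displayed key identity has the factors on the right-hand side in the wrong order: $(E^{t-1,t}_{i1})_{t-1,t}$ has size $p\times m$ and $f(E^{t-2,t}_{j1})_{1,t-1}$ has size $n_1\times p$, so the product as you wrote it is undefined. Expanding $0=[f(E^{t-1,t}_{i\ell}),E^{t-2,t}_{js}]_{1t}+[E^{t-1,t}_{i\ell},f(E^{t-2,t}_{js})]_{1t}$ actually gives
\[
f(E^{t-1,t}_{i\ell})_{1,t-2}\,(E^{t-2,t}_{js})_{t-2,t}
\;=\;
f(E^{t-2,t}_{js})_{1,t-1}\,(E^{t-1,t}_{i\ell})_{t-1,t},
\]
which is the correct column-analogue of \eqref{char-2-12-13-blocks}. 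Second, for the case $m\ge 2$ you must start with \emph{general} column indices $\ell,s\in[m]$ rather than fixing both to $1$; otherwise you only conclude $f(E^{t-1,t}_{i1})_{1,t-2}=0$, not $f(\N^{t-1,t})_{1,t-2}=0$. With general indices, choose $s\ne\ell$ and compare column $s$ on both sides to kill the $j$-th column of $f(E^{t-1,t}_{i\ell})_{1,t-2}$ for every $j$, exactly as rows were handled in Lemma \ref{lemma A12 and A13.}. After these corrections your argument goes through verbatim.
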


\begin{proof}
 The proof (omitted) is similar to that of Lemma \ref{lemma A12 and A13.}.
\end{proof}

Now we are able to describe  $\Der(\N)$ for the case ${\rm char}(\F)=2$.

\begin{theorem} \label{thm:derivation char 2}
When $\rm{char}(\F)=2$, every derivation $f$  of the Lie algebra $\N$ can be  (not uniquely) written as
\begin{eqnarray}\label{main result-char-2}
f &=& \ad{X} +\varphi_{1t} + {\phi}^{12}_{2t}+ {\phi}^{t-1,t}_{1,t-1} + {\psi}^{12;13}_{3t;2t}+\psi^{t-1,t;t-2,t}_{1,t-2;1,t-1},
\end{eqnarray}
where  the summand components
${X}$, $\varphi_{1t}$, ${\phi}^{12}_{2t}$, ${\phi}^{t-1,t}_{1,t-1}$
are described in Theorem \ref{thm:derivation not 2}, and
${\psi}^{12;13}_{3t;2t}$ and $\psi^{t-1,t;t-2,t}_{1,t-2;1,t-1}$ are determined as follow:
\begin{enumerate}
\item ${\psi}^{12;13}_{3t;2t}\equiv 0$ unless the first block row of $\N$ has only one row, in which
  $ {\psi}^{12;13}_{3t;2t}\in \Der(\N)$ maps $\N^{12}$ to $\N^{3t}$, $\N^{13}$ to $\N^{2t}$, and the other $\N^{ij}\subseteq\N$ to 0;
the explicit form  of ${\psi}^{12;13}_{3t;2t}$ is given in Lemma  \ref{lemma A12 and A13.};

\item $\psi^{t-1,t;t-2,t}_{1,t-2;1,t-1}\equiv 0$ unless the last block column of $\N$ has only one column, in which
  $\psi^{t-1,t;t-2,t}_{1,t-2;1,t-1}\in \Der(\N)$ maps $\N^{t-1,t}$ to $\N^{1,t-2}$, $\N^{t-2,t}$ to $\N^{1,t-1}$, and the other $\N^{ij}\subseteq\N$ to 0;
the explicit form  of $\psi^{t-1,t;t-2,t}_{1,t-2;1,t-1}$ is given in Lemma  \ref{lemma A(t-1,t) and A(t-2,t)}.

\end{enumerate}

\end{theorem}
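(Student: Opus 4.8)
The plan is to follow the same reduction strategy used for Theorem~\ref{thm:derivation not 2}, peeling off successive correction terms from a given $f\in\Der(\N)$ until nothing remains, but now carrying along the extra blocks that Lemmas~\ref{image of (12)} and~\ref{image of (i,j) block} permit in characteristic~$2$. First I would invoke Lemma~\ref{lemma X^{pq}} to obtain matrices $X_{pq}\in N_{pq}$ for $(p,q)\in\Omega$, set $X_0:=\sum_{(p,q)\in\Omega}(X_{pq})^{pq}$, and replace $f$ by $f_0:=f-\ad X_0$, which kills the ``adjoint-type'' off-diagonal images exactly as in~\eqref{f_0 property}. The key difference from the $\mathrm{char}(\F)\ne2$ case is that, by \eqref{eq for (12)}, \eqref{eq for (t-1,t)}, \eqref{eq for (13)}, and \eqref{eq for (t-2,t)}, the residual images of $f_0$ can now also hit the $(3,t)$ block (from $\N^{12}$ and $\N^{13}$) and the $(1,t-2)$ block (from $\N^{t-1,t}$ and $\N^{t-2,t}$), in addition to the $(2,t)$, $(1,t-1)$, superdiagonal-to-$(1,t)$, and diagonal-block contributions already present.

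Next I would split off these new pieces. Define ${\psi}^{12;13}_{3t;2t}\in\End(\N)$ by $A\mapsto f_0(A^{12})^{3t}+f_0(A^{13})^{2t}$ and $\psi^{t-1,t;t-2,t}_{1,t-2;1,t-1}\in\End(\N)$ by $A\mapsto f_0(A^{t-1,t})^{1,t-2}+f_0(A^{t-2,t})^{1,t-1}$; by Lemmas~\ref{lemma A12 and A13.} and~\ref{lemma A(t-1,t) and A(t-2,t)} (the ``furthermore'' direction, using the compatibility relation \eqref{char-2-12-13-blocks} and its analogue, which $f_0$ inherits since $f_0\in\Der(\N)$) these two maps are themselves derivations, and they vanish unless the first block row (resp.\ last block column) has only one row (resp.\ column). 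Subtracting them yields a derivation $f_1':=f_0-{\psi}^{12;13}_{3t;2t}-\psi^{t-1,t;t-2,t}_{1,t-2;1,t-1}$ whose residual off-diagonal images are now confined exactly as in the $\mathrm{char}(\F)\ne2$ case: the $(2,t)$ block on $\N^{12}$, the $(1,t-1)$ block on $\N^{t-1,t}$, the $(1,t)$ block on superdiagonal blocks, and the diagonal $(i,j)=(i,i)$ feedback. From this point the argument is verbatim the proof of Theorem~\ref{thm:derivation not 2}: extract ${\phi}^{12}_{2t},{\phi}^{t-1,t}_{1,t-1}$ via Lemmas~\ref{lemma A12.} and~\ref{lemma A_t-1,t.}, then $\varphi_{1t}$ via Lemma~\ref{map to center}, then run the induction on $k$ using Lemmas~\ref{thm:block-tran-1} and~\ref{thm:block-tran-4} to produce the diagonal blocks $X^{ii}$, and finally set $X:=X_0+\sum_{i=1}^t X^{ii}$ to obtain \eqref{main result-char-2}.

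The one point requiring genuine care — and the main obstacle — is verifying that the new subtractions do not disturb the later stages, i.e.\ that the blocks $(3,t)$ and $(1,t-2)$ removed by the $\psi$'s are genuinely \emph{independent} of the blocks removed later by ${\phi}^{12}_{2t}$, ${\phi}^{t-1,t}_{1,t-1}$, $\varphi_{1t}$, and the $\ad X^{ii}$. Concretely I must check: (i) the inductive step of Theorem~\ref{thm:derivation not 2} only ever reads the $(p,q)$ blocks with $p<q\le k+1$ of $f_2^{(k-1)}$ and the superdiagonal-to-$(1,t)$ and $(2,t)$/$(1,t-1)$ images, none of which is a $(3,t)$ or $(1,t-2)$ block once $t\ge 4$ and $k+1<t$; and (ii) when $t=4$ the $(3,t)=(3,4)$ block is superdiagonal and the $(1,t-2)=(1,2)$ block is superdiagonal, so one must confirm that in the $m=1$ (resp.\ one-column) case the derivation property forces these residuals to be exactly the ``diagonal'' and $\varphi_{1t}$ contributions already accounted for, with no double-counting — this is precisely what the explicit bases and row/column-matching conditions in Lemmas~\ref{lemma A12 and A13.}(2) and~\ref{lemma A(t-1,t) and A(t-2,t)}(2) guarantee. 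Once this bookkeeping is discharged, the decomposition~\eqref{main result-char-2} follows, and each summand lies in $\Der(\N)$ by the cited lemmas, completing the proof.
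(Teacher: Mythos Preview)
Your proposal is correct and follows essentially the same route as the paper: run the proof of Theorem~\ref{thm:derivation not 2} up through the construction of $X_0$ and $f_0$ satisfying \eqref{f_0 property}, then define the two $\psi$ maps exactly as you do (these are precisely the paper's definitions), invoke Lemmas~\ref{lemma A12 and A13.} and~\ref{lemma A(t-1,t) and A(t-2,t)} to see they are derivations, subtract them, and continue the remainder of the Theorem~\ref{thm:derivation not 2} argument verbatim. Your extra caution in point~(ii) about $t=4$ is unnecessary: the $\psi$ maps are defined so as to exactly cancel the $(3,t)$ component of $f_0|_{\N^{12}}$ and the $(2,t)$ component of $f_0|_{\N^{13}}$ (and dually), and since these are specified by \emph{domain} block as well as target block, there is no possibility of interference with the later extraction of $\phi^{12}_{2t}$, $\varphi_{1t}$, or the diagonal $X^{ii}$, regardless of whether $(3,t)$ happens to be superdiagonal.
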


\begin{proof}
Given $f\in\Der(\N)$, we can proceed the proof of Theorem \ref{thm:derivation not 2} up to \eqref{f_0 property}. Then we define 
${\psi}^{12;13}_{3t;2t}, \psi^{t-1,t;t-2,t}_{1,t-2;1,t-1}\in \End(\N)$ such that for $A\in \N$, 
\begin{eqnarray*}
{\psi}^{12;13}_{3t;2t}(A) &:=& f_0(A^{12})^{3t}+ f_0(A^{13})^{2t} = f(A^{12})^{3t}+ f(A^{13})^{2t},
\\
\psi^{t-1,t;t-2,t}_{1,t-2;1,t-1}(A) &:=& f_0(A^{t-1, t})^{1, t-2}+ f_0(A^{t-2, t})^{1, t-1} = f(A^{t-1, t})^{1, t-2}+ f(A^{t-2, t})^{1, t-1}.
\end{eqnarray*}
Both linear maps are derivations by Lemmas \ref{lemma A12 and A13.} and \ref{lemma A(t-1,t) and A(t-2,t)}.
Subtracting them from $f$, we can continue the remaining proof of Theorem \ref{thm:derivation not 2}.
\end{proof}

\end{document}